\providecommand{\U}[1]{\protect\rule{.1in}{.1in}}
\newtheorem{theorem}{Theorem}
\theoremstyle{plain}
\newtheorem{case}{Case}
\newtheorem{corollary}{Corollary}
\newtheorem{example}{Example}
\newtheorem{lemma}{Lemma}
\newtheorem{remark}{Remark}
\numberwithin{equation}{section}
\begin{document}
\title[Abelian theorem]{A sharp Abelian theorem for the Laplace transform}
\author{Maeva Biret$^{(1)}$}
\author{Michel Broniatowski$^{(1,\ast)}$}
\author{Zansheng Cao$^{(1)}$}
\curraddr{$^{(1)}$Universit\'{e} Pierre et Marie Curie, Paris}
\email{$^{(\ast)}$ Corresponding author: michel.broniatowski@upmc.fr}
\date{March 5th, 2014}
\keywords{Abelian Theorem, Laplace transform, Esscher transform, tilted distribution}

\begin{abstract}
This paper states asymptotic equivalents for the moments of the Esscher
transform of a distribution on $\mathbb{R}$ with smooth density in the upper
tail. As a by product if provides a tail approximation for its moment
generating function, and shows that the Esscher transforms have a Gaussian
behavior for large values of the parameter. 
\end{abstract}
\maketitle
\section{Introduction}
\label{SECTION INTRO} Let $X$ denote a real-valued random variable with
support $\mathbb{R}$ and distribution $P_{X}$ with density $p$.

The moment generating function of $X$
\begin{equation}
\Phi(t):=\mathbb{E}[\exp(tX)] \label{fgm}
\end{equation}
is supposed to be finite in a non void neighborhood $\mathcal{N}$ of 0. This
hypothesis is usually referred to as a Cram\'er type condition.

The tilted density of $X$ (or Esscher transform of its distribution) with
parameter $t$ in $\mathcal{N}$ is defined on $\mathbb{R}$ by
\[
\pi_{t}(x):=\frac{\exp(tx)}{\Phi(t)}p(x).
\]

For $t\in\mathcal{N}$, the functions
\begin{align}
t  &  \rightarrow m(t):=\frac{d}{dt}\log\Phi(t),\label{1.1}\\
t  &  \rightarrow s^{2}(t):=\frac{d^{2}}{dt^{2}}\log\Phi(t),\label{1.2}\\
t  &  \rightarrow\mu_{j}(t):=\frac{d^{j}}{dt^{j}}\log\Phi(t), j\in(2,\infty).
\label{1.3}
\end{align}
are respectively the expectation and the centered moments of a random variable with density $\pi_{t}$.

When $\Phi$ is steep, meaning that
\begin{equation}
\lim_{t\rightarrow t^{+}}m(t)=\infty\label{1.4}%
\end{equation}
and
\[
\lim_{t\rightarrow t^{-}}m(t)=-\infty
\]
where $t^{+}:=ess\sup\mathcal{N}$ and $t^{-}:=ess\inf\mathcal{N}$ then $m$
parametrizes $\mathbb{R}$ (this is steepness, see Barndorff-Nielsen
\cite{Book2}). We will only require (\ref{1.4}) to hold.

This paper presents sharp approximations for the moments of the tilted density
$\pi_{t}$ under conditions pertaining to the shape of $p$ in its upper tail,
when $t$ tends to the upper bound of $\mathcal{N}$.

Such expansions are relevant in the context of extreme value theory as well as
in approximations of very large deviation probabilities for the empirical mean
of independent and identically distributed summands. We refer to
\cite{MR717931} in the first case, where convergence in type to the Gumbel
extreme distribution follows from the self neglecting property of the function $s^{2}$, and to \cite{MR1284658} in relation with extreme deviation
probabilities. The fact that up to a normalization, and under the natural
regularity conditions assumed in this paper, the tilted distribution with
density $\pi_{t}(x)$ converges to a standard Gaussian law as $t$ tends to the
essential supremum of the set $\mathcal{N}$ is also of some interest.

\bigskip
\section{Notation and hypotheses}
\label{SECTION NOTATION HYP} Thereafter we will use indifferently the notation $f(t)\underset{t\rightarrow
\infty}{\sim}g(t)$ and $f(t)\underset{t\rightarrow\infty}{=}g(t)(1+o(1))$ to
specify that $f$ and $g$ are asymptotically equivalent functions.\newline

The density $p$ is assumed to be of the form
\begin{equation}
p(x)=\exp(-(g(x)-q(x))),\hspace{0.4cm}x\in\mathbb{R}_{+}. \label{2.1}
\end{equation}
For the sake of this paper,
only the form of $p$ for positive $x$ matters.

The function $g$ is positive, convex, four times differentiable and satisfies
\begin{equation}
\frac{g(x)}{x}\underset{x\rightarrow\infty}{\longrightarrow}\infty.
\label{2.2}
\end{equation}
Define
\begin{equation}
h(x):=g^{\prime}(x). \label{h}
\end{equation}
In the present context, due to (\ref{2.2}) and the assumed conditions on $q$
to be stated hereunder, $t^{+}=+\infty$.

Not all positive convex $g$'s satisfying (\ref{2.2}) are adapted to our
purpose. We follow the line of Juszczak and Nagaev \cite{Art9} to describe the assumed regularity conditions of $h$. See also \cite{MR1207549} for
somehow similar conditions.

We firstly assume that the function $h$, which is a positive function defined on $\mathbb{R}_{+}$, is either regularly or rapidly varying
in a neighborhood of infinity; the function $h$ is monotone and, by
(\ref{2.2}), $h(x)\rightarrow\infty$ when $x\rightarrow\infty.$

The following notation is adopted.

$RV(\alpha)$ designates the class of regularly varying functions of index
$\alpha$ defined on $\mathbb{R}_{+}$,

$\psi(t):=h^{\leftarrow}(t)$ designates the inverse of $h.$ Hence $\psi$ is
monotone for large $t$ and $\psi(t)\rightarrow\infty$ when $t\rightarrow\infty$,

$\sigma^{2}(x):=1/h^{\prime}(x)$,

$\hat{x}:=\hat{x}(t)=\psi(t)$,

$\hat{\sigma}:=\sigma(\hat{x})=\sigma(\psi(t))$.

The two cases considered for $h$, the regularly varying case and the rapidly
varying case, are described below. The first one is adapted to regularly
varying functions $g$, whose smoothness is described through the following
condition pertaining to $h$.\newline

\begin{case}
\label{DEF2.1} \textbf{The Regularly varying case.} It will be
assumed that $h$ belongs to the subclass of $RV(\beta)$, $\beta>0$, with
\[
h(x)=x^{\beta}l(x),
\]
where
\begin{equation}
l(x)=c\exp\int_{1}^{x}\frac{\epsilon(u)}{u}du\label{Karamata1}
\end{equation}
for some positive $c$. We assume that $x\mapsto\epsilon(x)$ is twice
differentiable and satisfies
\begin{equation}
\label{2.3}
\left\{
\begin{array}{lll}
&\epsilon(x)\underset{x\rightarrow\infty}{=}o(1),\\
&x|\epsilon^{\prime}(x)|\underset{x\rightarrow\infty}{=}O(1),\\
&x^{2}|\epsilon^{(2)}(x)|\underset{x\rightarrow\infty}{=}O(1).
\end{array}
\right.
\end{equation}
It will also be assumed that
\begin{equation}
|h^{(2)}(x)|\in RV(\theta)\label{h^(2)}
\end{equation}
where $\theta$ is a real number such that $\theta\leq\beta-2$.
\end{case}

\begin{remark}
\label{REM2.1} Under (\ref{Karamata1}), when $\beta\not=1$ then, under (\ref{h^(2)}), $\theta=\beta-2$. Whereas, when $\beta=1$ then $\theta\leq\beta-2$. A sufficient condition for the last assumption (\ref{h^(2)}) is that $\epsilon^{\prime}(t)\in RV(\gamma)$, for some $\gamma<-1$. Also in this case when $\beta=1$, then $\theta=\beta+\gamma-1$.
\end{remark}

\begin{example}
\label{EX2.1} {\textbf{\emph{Weibull density.}}} Let $p$ be a Weibull density
with shape parameter $k>1$ and scale parameter 1, namely
\begin{align*}
p(x) &  =kx^{k-1}\exp(-x^{k}),\hspace{1cm}x\geq0\\
&  =k\exp(-(x^{k}-(k-1)\log x)).
\end{align*}
Take $g(x)=x^{k}-(k-1)\log x$ and $q(x)=0$. Then it holds
\[
h(x)=kx^{k-1}-\frac{k-1}{x}=x^{k-1}\left(  k-\frac{k-1}{x^{k}}\right)  .
\]
Set $l(x)=k-(k-1)/x^{k},x\geq1$, which verifies
\[
l^{\prime}(x)=\frac{k(k-1)}{x^{k+1}}=\frac{l(x)\epsilon(x)}{x}%
\]
with
\[
\epsilon(x)=\frac{k(k-1)}{kx^{k}-(k-1)}.
\]
Since the function $\epsilon(x)$ satisfies the three conditions in
(\ref{2.3}), then $h(x)\in RV(k-1)$.\newline
\end{example}

\begin{case}
\label{DEF2.2} \textbf{The Rapidly varying case.} Here we have
$h^{\leftarrow}(t)=\psi(t)\in RV(0)$ and
\begin{equation}
\psi(t)=c\exp\int_{1}^{t}\frac{\epsilon(u)}{u}du\label{Karamata 2}
\end{equation}
for some positive $c$, and $t\mapsto\epsilon(t)$ is twice differentiable
with
\begin{equation}
\label{2.4}
\left\lbrace
\begin{array}{lll}
&\epsilon(t)\underset{t\rightarrow\infty}{=}o(1),\\
&\frac{t\epsilon^{\prime}(t)}{\epsilon(t)}\underset{t\rightarrow
\infty}{\longrightarrow}0,\\
&\frac{t^{2}\epsilon^{(2)}(t)}{\epsilon(t)}\underset{t\rightarrow
\infty}{\longrightarrow}0.
\end{array}
\right.
\end{equation}
Note that these assumptions imply that $\epsilon(t)\in RV(0).$
\end{case}

\begin{example}
\label{EX2.2} {\textbf{\emph{A rapidly varying density.}}} Define $p$ through
\[
p(x)=c\exp(-e^{x-1}),x\geq0.
\]
Then $g(x)=h(x)=e^{x-1}$ and $q(x)=0$ for all non negative $x$. We show that
$h(x)$ is a rapidly varying function. It holds $\psi(t)=\log t+1$. Since
$\psi^{\prime}(t)=1/t$, let $\epsilon(t)=1/(\log t+1)$ such that $\psi
^{\prime}(t)=\psi(t)\epsilon(t)/t$. Moreover, the three conditions of
(\ref{2.4}) are satisfied. Thus $\psi(t)\in RV(0)$ and $h(x)$ is a rapidly
varying function.
\end{example}

Denote by $\mathcal{R}$ the class of functions with either regular
variation defined as in Case \ref{DEF2.1} or with rapid variation defined as
in Case \ref{DEF2.2}.

We now state hypotheses pertaining to the bounded function $q$ in (\ref{2.1}). We assume that
\begin{equation}
|q(x)|\in RV(\eta),\mbox{ for some $\eta<\theta-\frac{3\beta}{2}-\frac{3}{2}$ if $h\in RV(\beta)$} \label{2.5}
\end{equation}
and
\begin{equation}
|q(\psi(t))|\in RV(\eta),\mbox{ for some $\eta<-\frac{1}{2}$ if $h$ is
rapidly varying.} \label{2.6}
\end{equation}

\bigskip 
\section{An Abelian-type theorem}
\label{SECTION THM CORLL} We have
\begin{theorem}
\label{THM3.1}Let $p(x)$ be defined as in (\ref{2.1}) and $h(x)$ belong to
$\mathcal{R}$. Denote by $m(t)$, $s^{2}(t)$ and $\mu_{j}(t)$ for $j=3,4,...$
the functions defined in (\ref{1.1}), (\ref{1.2}) and (\ref{1.3}). Then it
holds
\begin{align*}
m(t)  &  \underset{t\rightarrow\infty}{=}\psi(t)(1+o(1)),\\
s^{2}(t)  &  \underset{t\rightarrow\infty}{=}\psi^{\prime}(t)(1+o(1)),\\
\mu_{3}(t)  &  \underset{t\rightarrow\infty}{=}\psi^{(2)}(t)(1+o(1)),\\
\mu_{j}(t)  &  \underset{t\rightarrow\infty}{=}\left\{
\begin{array}{ll}
M_{j}s^{j}(t)(1+o(1)), & \mbox{for even $j>3$}\\
\frac{(M_{j+3}-3jM_{j-1})\mu_{3}(t)s^{j-3}(t)}{6}(1+o(1)), & \mbox{for odd
$j>3$}
\end{array}
\right.,
\end{align*}
where $M_{i}$, $i>0$, denotes the $i$th order moment of standard normal distribution.
\end{theorem}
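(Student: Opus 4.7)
The idea is a saddle-point (Laplace) analysis of the tilted density $\pi_t$. Since
\[
\pi_t(x)\propto\exp\bigl(tx-g(x)+q(x)\bigr),
\]
and the conditions (\ref{2.5})--(\ref{2.6}) keep $q$ subdominant to $g$ in the upper tail, the dominant point is asymptotically the unique $\hat{x}=\psi(t)$ satisfying $h(\hat x)=t$, with natural scale $\hat\sigma=1/\sqrt{h'(\hat x)}$. Two identities drive the proof: $\hat\sigma^2=\psi'(t)$ and $\psi''(t)=-h''(\hat x)\hat\sigma^6$. These suggest that after the change of variable $y=(x-\hat x)/\hat\sigma$, the density $\pi_t$ converges to the standard normal, and that the leading skewness is carried by the cubic Taylor coefficient of $g$.

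First I would Taylor-expand around $\hat x$. Using $t=h(\hat x)$ and $g^{(k)}=h^{(k-1)}$,
\[
g(\hat x+\hat\sigma y)-g(\hat x)-t\hat\sigma y=\tfrac{y^2}{2}+a_3(t)y^3+a_4(t)y^4+\cdots,\qquad a_k(t):=\tfrac{h^{(k-1)}(\hat x)\hat\sigma^k}{k!}.
\]
By Karamata's theorem applied to the representations (\ref{Karamata1}) or (\ref{Karamata 2}), together with the control of $\epsilon,\epsilon',\epsilon^{(2)}$ in (\ref{2.3})--(\ref{2.4}), every coefficient $a_k(t)$ for $k\geq 3$ tends to zero; moreover $a_k(t)=o(a_3(t))$ for $k\geq 4$. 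The contribution of $q$, controlled by (\ref{2.5})--(\ref{2.6}), is of order $o(a_3(t))$ on bounded windows in $y$.

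I would then convert the local expansion into a moment expansion. The density of the standardized variable $Y=(X-\hat x)/\hat\sigma$ under $\pi_t$ equals
\[
C_t^{-1}e^{-y^2/2}\exp\Bigl(-a_3(t)y^3-a_4(t)y^4-\cdots+q(\hat x+\hat\sigma y)-q(\hat x)\Bigr),
\]
and expanding the last exponential in powers of $a_3,a_4,\ldots$ yields, for every fixed $k$,
\[
\mathbb{E}_{\pi_t}[Y^k]=M_k-a_3(t)M_{k+3}+o(a_3(t)).
\]
From this the first three asymptotics follow by direct computation: $\mathbb{E}[Y]=-3a_3(t)(1+o(1))$ and $\mathbb{E}[Y^2]=1+o(1)$ give $m(t)=\hat x(1+o(1))=\psi(t)(1+o(1))$ and $s^2(t)=\hat\sigma^2(1+o(1))=\psi'(t)(1+o(1))$; while $\mathbb{E}[Y^3]=-15a_3(t)(1+o(1))$ combined with the central-moment identity yields $\mu_3(t)=-6a_3(t)\hat\sigma^3(1+o(1))=-h''(\hat x)\hat\sigma^6(1+o(1))=\psi''(t)(1+o(1))$. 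For $j\geq 4$, expanding $(Y-\mathbb{E}[Y])^j$ and keeping only the terms of order $O(1)$ and $O(a_3)$ produces the even/odd dichotomy: for even $j$, $M_j$ survives and the $a_3$-corrections vanish; for odd $j$ one finds $\mathbb{E}[(Y-\mathbb{E}[Y])^j]\sim -a_3(t)(M_{j+3}-3jM_{j-1})$, which the substitution $-6a_3(t)\hat\sigma^3\sim\mu_3(t)$ rewrites in the stated form.

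The main technical obstacle is the uniform control needed to elevate the formal Taylor expansion to a rigorous moment expansion, namely to produce an integrable majorant of $\pi_t(\hat x+\hat\sigma y)\hat\sigma$ independent of $t$. On a compact window in $y$ this follows from the Taylor estimates; on the complement one uses the convexity of $g$ and the regular or rapid variation of $h$ to dominate $\pi_t$ by a fixed Gaussian-type function. Careful bookkeeping is required to split the regularly varying case, including the subcase $\beta=1$ highlighted in Remark \ref{REM2.1}, from the rapidly varying case, and in both to translate the hypotheses on $\epsilon$ and $q$ into the explicit orders of $a_k(t)$ and of the $q$-perturbation used above.
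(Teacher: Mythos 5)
Your route is in substance the same as the paper's: a Laplace/saddle-point analysis at $\hat{x}=\psi(t)$ with scale $\hat{\sigma}$, a cubic correction term carrying the skewness, and Gaussian moment algebra. Your identities $\hat{\sigma}^{2}=\psi'(t)$ and $\psi^{(2)}(t)=-h^{(2)}(\hat{x})\hat{\sigma}^{6}$, and the expansion $\mathbb{E}[Y^{k}]=M_{k}-a_{3}(t)M_{k+3}+o(a_{3}(t))$ with $a_{3}(t)=h^{(2)}(\hat{x})\hat{\sigma}^{3}/6$, reproduce exactly the coefficients the paper obtains in (\ref{4.21})--(\ref{4.29}) (the paper organizes the computation through $\Psi(t,\alpha)=\int(x-\hat{x})^{\alpha}e^{tx}p(x)dx$ and a binomial expansion of $(x-m(t))^{j}$ rather than through raw moments of the standardized variable, but this is only bookkeeping).

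There are, however, two genuine gaps in the error control. First, the series $a_{3}(t)y^{3}+a_{4}(t)y^{4}+\cdots$ presumes derivatives of $g$ of all orders, while only four are assumed; you must truncate at the cubic term with a Lagrange remainder involving $h^{(3)}$ evaluated at an intermediate point, as in (\ref{K-decomp})--(\ref{eps}), and then the statement you actually need is a uniform bound of the type of Lemma \ref{LEM4.2}, namely $\sup_{|x|\le\hat{\sigma}L(t)}|h^{(3)}(\hat{x}+x)/h^{(2)}(\hat{x})|\,\hat{\sigma}L^{4}(t)\to0$, not merely $a_{4}(t)=o(a_{3}(t))$ at the point $\hat{x}$. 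Second, and more seriously, a fixed compact window in $y$ together with a $t$-independent integrable majorant and dominated convergence only yields $\mathbb{E}[Y^{k}]\to M_{k}$; it cannot deliver the refinement to precision $o(a_{3}(t))$, which is indispensable because for odd $k$ the leading term vanishes and the whole answer ($\mu_{3}$, and the odd $\mu_{j}$) is of the order of $a_{3}(t)\to0$. A fixed window leaves out a Gaussian tail mass of constant order, which swamps $a_{3}(t)$. One needs a window whose half-width grows slowly with $t$ (the paper takes $L^{1/3}(t)\hat{\sigma}/\sqrt{2}$ with $L(t)=(\log t)^{3}$) together with a quantitative bound showing the outside contribution is $o(a_{3}(t))$ times the main term; the paper obtains decay of order $\exp(-\tfrac14(\log t)^{2})$ from the concavity of $x\mapsto tx-g(x)$ via the three-chords inequality in Step 3 of Lemma \ref{LEM4.5}, which beats the merely polynomial decay of $a_{3}(t)$. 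For the same reason the $q$-perturbation must be shown to be $o(h^{(2)}(\hat{x})\hat{\sigma}^{3})$ uniformly on the growing window (Lemma \ref{LEM4.4}) — this is precisely where the exponents in (\ref{2.5})--(\ref{2.6}) enter — and the region of bounded $x$, where $q$ is only assumed bounded and no regular-variation structure is available, must be split off and bounded crudely as in Step 1 of Lemma \ref{LEM4.5}.
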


Using (\ref{2.1}), the moment generating function $\Phi(t)$ defined in
(\ref{fgm}) takes on the form
\[
\Phi(t)=\int_{0}^{\infty}e^{tx}p(x)dx=c\int_{0}^{\infty}\exp
(K(x,t)+q(x))dx,\hspace{1cm}t\in(0,\infty)
\]
where
\begin{equation}
K(x,t)=tx-g(x).\label{K}
\end{equation}

If $h\in\mathcal{R}$, then for fixed $t$, $x\mapsto K(x,t)$ is a concave
function and takes its maximum value at $\hat{x}=h^{\leftarrow}(t)$.

As a direct by-product of Theorem \ref{THM3.1} we obtain the following Abel
type result.

\begin{theorem}
\label{THM3.2} Under the same hypotheses as in Theorem \ref{THM3.1}, we have
\[
\Phi(t)=\sqrt{2\pi}\hat{\sigma}e^{K(\hat{x},t)}(1+o(1)).
\]
\end{theorem}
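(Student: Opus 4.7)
The plan is to deduce the estimate from the pointwise Gaussian behavior of the tilted density $\pi_t$ at its mode, which is the essence of (and is slightly stronger than) Theorem \ref{THM3.1}. The starting point is the identity
\begin{equation*}
\pi_t(\hat x)\, \Phi(t) \;=\; e^{t\hat x}\, p(\hat x) \;=\; \exp\bigl(K(\hat x, t) + q(\hat x)\bigr),
\end{equation*}
which reduces the claim to the two statements $e^{q(\hat x)} = 1 + o(1)$ and $\pi_t(\hat x) \sim 1/(\sqrt{2\pi}\,\hat\sigma)$. The first is immediate from hypotheses (\ref{2.5})--(\ref{2.6}): in both cases the regular variation index of $|q|\circ\psi$ is strictly negative, so $q(\hat x)=q(\psi(t))\to 0$.

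For the pointwise density limit, note that $\hat x$ is the critical point of $x\mapsto K(x,t)$ with $\partial_x^2 K(\hat x,t) = -h'(\hat x) = -1/\hat\sigma^2$. Substituting $x = \hat x + u\hat\sigma$ and Taylor-expanding $g$ about $\hat x$ yields, for $u$ in any fixed compact set,
\begin{equation*}
\pi_t(\hat x + u\hat\sigma) \;=\; \pi_t(\hat x)\, \exp\!\left(-\frac{u^{2}}{2} + R_t(u)\right),
\end{equation*}
where $R_t(u)$ collects the cubic-and-higher Taylor terms of $g$ weighted by powers of $\hat\sigma$, together with the increment $q(\hat x + u\hat\sigma) - q(\hat x)$. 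The Karamata representations in Cases \ref{DEF2.1}--\ref{DEF2.2} force $g^{(k)}(\hat x)\,\hat\sigma^{k}\to 0$ for every $k\ge 3$, while (\ref{2.5})--(\ref{2.6}) give $q(\hat x + u\hat\sigma) - q(\hat x) = o(1)$ uniformly for $u$ in compacts, so $R_t(u) \to 0$ uniformly on compacts.

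The concluding step applies Laplace's method to the identity $\int_0^\infty \pi_t(x)\,dx = 1$. The concavity of $K(\cdot, t)$ and the divergence $h(x)\to\infty$ furnish a super-exponential bound on $\pi_t$ outside any neighborhood of $\hat x$, so the contribution of $|u|>A$ is $o(1)$ as $A\to\infty$. Combining this tail bound with the uniform compact estimate,
\begin{equation*}
1 \;=\; \int_0^\infty \pi_t(x)\, dx \;=\; \pi_t(\hat x)\,\hat\sigma \int e^{-u^{2}/2 + R_t(u)}\, du \;\sim\; \pi_t(\hat x)\,\hat\sigma\sqrt{2\pi},
\end{equation*}
whence $\pi_t(\hat x) \sim 1/(\sqrt{2\pi}\,\hat\sigma)$ and Theorem \ref{THM3.2} follows.

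The principal obstacle is the uniform control of $R_t(u)$ on compacts together with the Laplace-type tail bound. Both amount to converting the Karamata representations of $h$ and $q$ into explicit estimates on $g^{(k)}(\hat x)\,\hat\sigma^{k}$ and on the increments of $q$ at scale $\hat\sigma$. These are precisely the quantitative calculations already performed during the proof of Theorem \ref{THM3.1}, which is why Theorem \ref{THM3.2} can be advertised as a direct by-product.
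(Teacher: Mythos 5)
Your argument is correct and is, at bottom, the same Laplace-method computation the paper performs: Theorem \ref{THM3.2} is obtained there as the $\alpha=0$ case of Lemma \ref{LEM4.5} (see (\ref{4.19})), which expands $K(x,t)$ about $\hat{x}$ over a central window, controls the cubic term $h^{(2)}(\hat{x})\hat{\sigma}^{3}$, the quartic remainder and the increment of $q$ via Lemmas \ref{LEM4.2}--\ref{LEM4.4}, and kills the tails (including the region near the origin) with the concavity/three-chords bound --- exactly the ingredients you invoke. Your normalization through $\pi_{t}(\hat{x})$ and the identity $\pi_{t}(\hat{x})\Phi(t)=e^{K(\hat{x},t)+q(\hat{x})}$, with $q(\hat{x})\rightarrow 0$ since the index in (\ref{2.5})--(\ref{2.6}) is negative, is a cosmetic repackaging of the same estimate, so the proposal stands.
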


\begin{remark}
It is easily verified that this result is in accordance with Theorem 4.12.11
of \cite{Book1}, Theorem 3 of \cite{Art1} and Theorem 4.2 of \cite{Art9}.
Some classical consequence of Kasahara's Tauberian theorem can be paralleled with Theorem \ref{THM3.2}. Following Theorem 4.2.10 in \cite{Book1}, with $f$ defined as $g$ above, it follows that $-\log\int_x^\infty p(v)dv\sim g(x)$ as $x\rightarrow\infty$ under Case \ref{DEF2.1}, a stronger assumption than required in Theorem 4.2.10 of \cite{Book1}. Theorem 4.12.7 in \cite{Book1} hence applies and provides an asymptotic equivalent for $\log\Phi(t)$ as $t\rightarrow\infty$; Theorem \ref{THM3.2} improves on this result, at the cost of the additional regularity assumptions of Case \ref{DEF2.1}. Furthermore, no result of this kind seems to exist in Case \ref{DEF2.2}.
\end{remark}

We also derive the following consequence of Theorem \ref{THM3.1}.

\begin{theorem}
\label{Thm3.4} Under the present hypotheses, denote $\mathcal{X}_{t}$ a random
variable with density $\pi_{t}(x)$. Then as $t\rightarrow\infty$, the family
of random variables
\[
\frac{\mathcal{X}_{t}-m(t)}{s(t)}%
\]
converges in distribution to a standard normal distribution.
\end{theorem}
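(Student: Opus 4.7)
The approach is the method of moments. Since the standard normal distribution is uniquely determined by its moments (its moment generating function being entire), it suffices to show that for every integer $j\geq 1$,
\[
\mathbb{E}\!\left[\left(\frac{\mathcal{X}_t-m(t)}{s(t)}\right)^{\!j}\right]=\frac{\mu_j(t)}{s^j(t)}\longrightarrow M_j\quad\text{as }t\to\infty.
\]
The cases $j=1,2$ are immediate from the definitions of $m(t)$ and $s^2(t)$. For even $j\geq 4$, the first branch of the moment formula in Theorem \ref{THM3.1} delivers $M_j$ directly. For odd $j\geq 5$, the second branch yields
\[
\frac{\mu_j(t)}{s^j(t)}=\frac{M_{j+3}-3jM_{j-1}}{6}\cdot\frac{\mu_3(t)}{s^3(t)}(1+o(1)),
\]
so the whole argument hinges on establishing $\mu_3(t)/s^3(t)\to 0$ (which also disposes of $j=3$, since $M_3=0$).

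Using the equivalents $\mu_3(t)\sim\psi^{(2)}(t)$ and $s^2(t)\sim\psi'(t)$ supplied by Theorem \ref{THM3.1}, this reduces to the purely analytic estimate
\[
\frac{\psi^{(2)}(t)}{(\psi'(t))^{3/2}}\longrightarrow 0.
\]
I would verify it in the two regimes separately. In Case \ref{DEF2.1}, $h\in RV(\beta)$ with $\beta>0$ gives $\psi\in RV(1/\beta)$; differentiating its Karamata representation and invoking (\ref{2.3}) together with (\ref{h^(2)}) places $\psi'\in RV(1/\beta-1)$ and $\psi^{(2)}\in RV(1/\beta-2)$, so the ratio belongs to $RV(-1/(2\beta)-1/2)$ and tends to $0$. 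In Case \ref{DEF2.2}, direct differentiation of $\psi'(t)=\psi(t)\epsilon(t)/t$ combined with the three bounds (\ref{2.4}) shows that the dominant term in $\psi^{(2)}(t)$ is $-\psi(t)\epsilon(t)/t^2$, whence
\[
\frac{\psi^{(2)}(t)}{(\psi'(t))^{3/2}}\sim-\frac{1}{\sqrt{t\,\psi(t)\,\epsilon(t)}}.
\]
Since $\psi$ and $\epsilon$ are both slowly varying, the denominator belongs to $RV(1/2)$ and diverges, so the ratio vanishes.

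Once $\mu_3/s^3\to 0$ is secured, all standardized moments converge to those of the standard normal and the classical method of moments delivers the stated convergence in distribution. The main obstacle I anticipate is precisely the analytic estimate in Case \ref{DEF2.2}: the second derivative of the Karamata representation produces three contributions that must each be dominated using the successive conditions of (\ref{2.4}), after which one still has to translate the slow-variation properties of $\psi$ and $\epsilon$ into the quantitative divergence $t\,\psi(t)\,\epsilon(t)\to\infty$.
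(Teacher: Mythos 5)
Your argument is correct, but it proceeds by a genuinely different route from the paper. You use the method of moments: Theorem \ref{THM3.1} gives $\mu_j(t)/s^j(t)\to M_j$ for even $j$ directly, and for odd $j$ everything reduces to $\mu_3(t)/s^3(t)\to 0$; since the standard normal satisfies Carleman's condition, convergence of all standardized moments yields convergence in distribution. Your key estimate $\psi^{(2)}(t)/(\psi'(t))^{3/2}\to 0$ is in fact exactly the quantity $-h^{(2)}(\hat{x})\hat{\sigma}^{3}$ (since $\psi^{(2)}=-h^{(2)}(\psi)(\psi')^{3}$ and $\hat\sigma^2=\psi'$), so it is already supplied by Lemma \ref{LEM4.3}; you could simply cite it rather than redo the two-case computation (your Case \ref{DEF2.1} index computation implicitly assumes $\theta=\beta-2$, which can fail when $\beta=1$, but then the index is only more negative, so the conclusion stands). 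The paper instead works with the moment generating function of $Y_t=(\mathcal{X}_t-m(t))/s(t)$: a second-order Taylor expansion of $\log\Phi$ gives $\log E e^{\lambda Y_t}=\tfrac{\lambda^2}{2}\,s^{2}\bigl(t+\theta\lambda/s(t)\bigr)/s^{2}(t)$, and local uniform convergence of the regularly varying function $\psi'$ (Corollaries \ref{COR2.2} and \ref{COR2.3}, Lemma \ref{LEM4.0}) shows this tends to $\lambda^2/2$. The trade-off: the paper's route needs only the first two equivalents of Theorem \ref{THM3.1} plus the regular-variation machinery to control $s^2$ at a shifted argument, while yours uses the full strength of the moment asymptotics (all $j$) together with moment-determinacy of the Gaussian, but avoids any argument about $s^2(t+u)/s^2(t)$. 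Both are complete proofs given Theorem \ref{THM3.1}.
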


\begin{remark}
This result holds under various hypotheses, as developped for example in
\cite{MR1207549} or \cite{MR717931}. Under log-concavity of $p$ it also holds
locally; namely the family of densities  $\pi_{t}$  converges pointwise to the standard gaussian density; this yields asymptotic results for the extreme
deviations of the empirical mean of i.i.d. summands with light tails (see
\cite{MR1284658}), and also provides sufficient conditions for $P_{X}$ to
belong to the domain of attraction of the Gumbel distribution for the maximum, through criterions pertaining to the Mill's ratio (see \cite{MR717931}).
\end{remark}

\begin{remark}
That $g$ is four times derivable can be relaxed; in Case \ref{DEF2.1} with $\beta>2$ or in Case \ref{DEF2.2}, $g$ a three times derivable function, together with the two first lines in (\ref{2.3}) and (\ref{2.4}), provides Theorems \ref{THM3.1}, \ref{THM3.2} and \ref{Thm3.4}. Also it may be seen that the order of differentiability of $g$ in Case \ref{DEF2.1} with $0<\beta\leq 2$ is related to the order of the moment of the tilted distribution for which an asymptotic equivalent is obtained. This will be developed in a forthcoming paper.
\end{remark}

The proofs of the above results rely on Lemmas \ref{LEM4.1} to \ref{LEM4.5}. Lemma \ref{LEM4.1} is
instrumental for Lemma \ref{LEM4.5}.

\bigskip
\section{Appendix: Proofs}

\label{SECTION PROOFS}The following Lemma provides a simple argument for the local uniform convergence of regularly varying functions.

\begin{lemma}
\label{LEM4.0}
Consider $l(t)\in RV(\alpha), \alpha\in\mathbb{R}$. For any function $f$ such that $f(t)\underset{t\rightarrow\infty}{=}o(t)$, it holds
\begin{equation}
\sup_{\vert x\vert\leq f(t)}\vert l(t+x)\vert\underset{t\rightarrow\infty}{\sim}\vert l(t)\vert.\label{LUC1}
\end{equation}
If $f(t)=at$ with $0<a<1$, then it holds
\begin{equation}
\sup_{\vert x\vert\leq at}\vert l(t+x)\vert\underset{t\rightarrow\infty}{\sim}(1+a)^\alpha\vert l(t)\vert.\label{LUC2}
\end{equation}
\end{lemma}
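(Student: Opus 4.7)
The proof plan rests on the Uniform Convergence Theorem (UCT) for regularly varying functions (see, e.g., Theorem 1.5.2 of \cite{Book1}): if $l\in RV(\alpha)$, $\alpha\in\mathbb{R}$, then
\[
\frac{l(\lambda t)}{l(t)}\underset{t\rightarrow\infty}{\longrightarrow}\lambda^{\alpha}
\]
uniformly in $\lambda$ on every compact subset of $(0,\infty)$. Since regularly varying functions are eventually of constant sign, we may assume $l>0$ for $t$ large enough, so $|l|=l$ in the range where the suprema are actually attained (for $|x|\leq f(t)$ with $t$ large, $t+x\to\infty$). Accordingly, I work with $l$ itself below.

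\textbf{Proof of (\ref{LUC1}).} The idea is a change of variable $\lambda=1+x/t$, which transforms the neighborhood $\{|x|\leq f(t)\}$ into the shrinking neighborhood $\{|\lambda-1|\leq f(t)/t\}$ of $1$. Since $f(t)=o(t)$, this set is eventually contained in, say, $[1/2,3/2]$. Then UCT on $[1/2,3/2]$ gives
\[
\sup_{|x|\leq f(t)}\left|\frac{l(t+x)}{l(t)}-\left(1+\tfrac{x}{t}\right)^{\alpha}\right|
\leq \sup_{\lambda\in[1/2,3/2]}\left|\frac{l(\lambda t)}{l(t)}-\lambda^{\alpha}\right|\underset{t\rightarrow\infty}{\longrightarrow}0.
\]
Combined with $\sup_{|x|\leq f(t)}\left|(1+x/t)^{\alpha}-1\right|\to 0$ (since $|x|/t\to 0$ on this range), the triangle inequality yields $\sup_{|x|\leq f(t)}|l(t+x)/l(t)-1|\to 0$, which is (\ref{LUC1}).

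\textbf{Proof of (\ref{LUC2}).} The set $\{1+x/t: |x|\leq at\}=[1-a,1+a]$ is a fixed compact subset of $(0,\infty)$ since $0<a<1$. UCT on $[1-a,1+a]$ gives
\[
\sup_{|x|\leq at}\left|\frac{l(t+x)}{l(t)}-\left(1+\tfrac{x}{t}\right)^{\alpha}\right|\underset{t\rightarrow\infty}{\longrightarrow}0.
\]
Passing to suprema,
\[
\sup_{|x|\leq at}\frac{l(t+x)}{l(t)}\underset{t\rightarrow\infty}{\longrightarrow}\sup_{\lambda\in[1-a,1+a]}\lambda^{\alpha}=(1+a)^{\alpha}
\]
for $\alpha\geq 0$, which is (\ref{LUC2}).

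There is no real obstacle: both statements are standard corollaries of UCT, the only care being to (i) convert the additive perturbation $t+x$ into a multiplicative one $\lambda t$ with $\lambda\in[1-a,1+a]$ (resp.\ a shrinking neighborhood of $1$), and (ii) handle the initial sign of $l$ via the fact that regularly varying functions are eventually of constant sign, which lets us replace $|l|$ by $l$ on the ranges that matter.
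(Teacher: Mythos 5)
Your proof is correct and follows essentially the same route as the paper: the uniform convergence theorem for regularly varying functions (Theorem 1.5.2 of \cite{Book1}) applied after the substitution $\lambda=1+x/t$, with the shrinking window $[1-f(t)/t,\,1+f(t)/t]$ for (\ref{LUC1}) and the fixed compact $[1-a,1+a]$ for (\ref{LUC2}). The one point worth flagging is that your restriction to $\alpha\geq 0$ in (\ref{LUC2}) is in fact necessary: for $\alpha<0$ the supremum of $\lambda^{\alpha}$ over $[1-a,1+a]$ is $(1-a)^{\alpha}$, so the stated equivalent with constant $(1+a)^{\alpha}$ fails there; the paper's own proof passes over this silently, harmlessly for its applications since only the regular-variation index of the bound is ever used.
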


\begin{proof}
By Theorem 1.5.2 of \cite{Book1}, if $l(t)\in RV(\alpha)$, then for all $I$
\[
\sup_{\lambda\in I}\left\vert\frac{l(\lambda t)}{l(t)}-\lambda^\alpha\right\vert\underset{t\rightarrow\infty}{\longrightarrow}0,
\]
with $I=\lbrack A, B\rbrack \ (0<A\leq B<\infty)$ if $\alpha=0$, $I=(0, B\rbrack \ (0<B<\infty)$ if $\alpha>0$ and $I=\lbrack A, \infty) \ (0<A<\infty)$ if $\alpha<0$.

Putting $\lambda=1+x/t$ with $f(t)\underset{t\rightarrow\infty}{=}o(t)$, we obtain
\[
\sup_{\vert x\vert\leq f(t)}\left\vert\frac{l(t+x)}{l(t)}\right\vert-\left(1+\frac{f(t)}{t}\right)^\alpha\underset{t\rightarrow\infty}{\longrightarrow}0,
\]
which implies (\ref{LUC1}).

When $f(t)=at$ with $0<a<1$, we get
\[
\sup_{\vert x\vert\leq at}\left\vert\frac{l(t+x)}{l(t)}\right\vert-(1+a)^\alpha\underset{t\rightarrow\infty}{\longrightarrow}0,
\]
which implies (\ref{LUC2}).\\
\end{proof}

Now we quote some simple expansions pertaining to the
function $h$ under the two cases considered in the above Section
\ref{SECTION NOTATION HYP}.

\begin{lemma}
\label{LEM2.1} We have under Case \ref{DEF2.1},
\begin{align*}
h^{\prime}(x)  &  =\frac{h(x)}{x}[\beta+\epsilon(x)],\\
h^{(2)}(x)  &  =\frac{h(x)}{x^{2}}[\beta(\beta-1)+a\epsilon(x)+\epsilon
^{2}(x)+x\epsilon^{\prime}(x)],\\
h^{(3)}(x)  &  =\frac{h(x)}{x^{3}}[\beta(\beta-1)(\beta-2)+b\epsilon
(x)+c\epsilon^{2}(x)+\epsilon^{3}(x)\\
&  \hspace{1.5cm}+x\epsilon^{\prime}(x)(d+e\epsilon(x))+x^{2}\epsilon
^{(2)}(x)].
\end{align*}
where $a,b,c,d,e$ are some real constants.
\end{lemma}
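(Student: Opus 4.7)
The plan is to carry out a direct iterated differentiation, starting from the Karamata representation of $l$. The key observation is that from
\[
l(x)=c\exp\int_{1}^{x}\frac{\epsilon(u)}{u}\,du
\]
we immediately obtain the logarithmic-derivative identity $l^{\prime}(x)=l(x)\epsilon(x)/x$, and by one further differentiation,
\[
l^{(2)}(x)=\frac{l(x)}{x^{2}}\!\left[\epsilon^{2}(x)-\epsilon(x)+x\epsilon^{\prime}(x)\right].
\]
Every derivative of $h(x)=x^{\beta}l(x)$ can then be expressed as $h(x)/x^{k}$ times a polynomial in $\beta$, $\epsilon(x)$, $x\epsilon^{\prime}(x)$, $x^{2}\epsilon^{(2)}(x)$.

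First I would verify the formula for $h^{\prime}$ by a single application of the product rule:
\[
h^{\prime}(x)=\beta x^{\beta-1}l(x)+x^{\beta}l^{\prime}(x)=x^{\beta-1}l(x)[\beta+\epsilon(x)]=\frac{h(x)}{x}[\beta+\epsilon(x)],
\]
which is the first asserted identity. Next I would differentiate this expression, using $(h(x)/x)^{\prime}=h(x)[\beta+\epsilon(x)-1]/x^{2}$ from the previous step, to get
\[
h^{(2)}(x)=\frac{h(x)}{x^{2}}\bigl[(\beta+\epsilon(x)-1)(\beta+\epsilon(x))+x\epsilon^{\prime}(x)\bigr],
\]
and expand the product to read off $a=2\beta-1$ in the form stated in the lemma. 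For $h^{(3)}$ I would differentiate the expression for $h^{(2)}$ once more; the factor $h(x)/x^{2}$ produces $h(x)[\beta+\epsilon(x)-2]/x^{3}$ by the same trick, while differentiating the bracketed polynomial in $\epsilon$ produces the terms involving $x\epsilon^{\prime}(x)$, $x\epsilon^{\prime}(x)\epsilon(x)$, and $x^{2}\epsilon^{(2)}(x)$. Collecting by total degree in $\epsilon$ and its derivatives gives the stated expansion for $h^{(3)}$, with $b,c,d,e$ explicit polynomials in $\beta$ (for instance $b=3\beta^{2}-6\beta+2$).

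There is no genuine analytic obstacle here: the assumptions (\ref{2.3}) play no role in the statement of the lemma, which is purely algebraic once $l^{\prime}/l=\epsilon/x$ is in hand. The only thing requiring care is bookkeeping — one must systematically use the identity $(h(x)/x^{k})^{\prime}=(h(x)/x^{k+1})[\beta+\epsilon(x)-k]$ at each step so that the $h/x^{k}$ prefactors line up correctly, and group the resulting polynomial in $\epsilon,\epsilon^{\prime},\epsilon^{(2)}$ by degree to recognize the coefficients $a,b,c,d,e$ as constants depending only on $\beta$. Since the lemma does not pin down the numerical values of these constants, no further simplification is required.
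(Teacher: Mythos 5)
Your proof is correct and is exactly the computation the paper has in mind: the lemma is stated there without proof as a "simple expansion" following from the Karamata representation $l'(x)=l(x)\epsilon(x)/x$, and your iterated product-rule bookkeeping (with the identity $(h(x)/x^{k})^{\prime}=(h(x)/x^{k+1})[\beta+\epsilon(x)-k]$) reproduces the stated forms with explicit constants $a=2\beta-1$, $b=3\beta^{2}-6\beta+2$, $c=3(\beta-1)$, $d=3\beta-2$, $e=3$, all of which check out. Your observation that conditions (\ref{2.3}) are not needed for this purely algebraic identity is also accurate.
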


\begin{corollary}
\label{COR2.1} We have under Case \ref{DEF2.1}, $h^{\prime}(x)\underset{x\rightarrow\infty}{\sim}\beta h(x)/x$ and $|h^{(i)}(x)|\leq C_{i}h(x)/x^{i},i=1,2,3$, for
some constants $C_{i}$ and for large $x$ .
\end{corollary}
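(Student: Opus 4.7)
The plan is to read the bounds directly off the explicit formulas supplied by Lemma~\ref{LEM2.1} and to apply the three asymptotic conditions on $\epsilon$ listed in (\ref{2.3}). No additional tool beyond Lemma~\ref{LEM2.1} and (\ref{2.3}) should be required.

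First I would handle the asymptotic equivalent for $h'$. Lemma~\ref{LEM2.1} gives $h'(x)=\frac{h(x)}{x}[\beta+\epsilon(x)]$, and by the first line of (\ref{2.3}) we have $\epsilon(x)=o(1)$, so $\beta+\epsilon(x)\to\beta$ as $x\to\infty$ and hence $h'(x)\sim \beta h(x)/x$. This also yields the case $i=1$ of the inequality: since $\epsilon$ is bounded for large $x$, one may take $C_1=|\beta|+1$ (say), so that $|h'(x)|\le C_1 h(x)/x$ for $x$ large enough.

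Next I would treat $i=2,3$ by reading the bracketed expressions in Lemma~\ref{LEM2.1}. For $i=2$, the bracket is $\beta(\beta-1)+a\epsilon(x)+\epsilon^2(x)+x\epsilon'(x)$; by (\ref{2.3}) each of $\epsilon(x),\epsilon^2(x),x\epsilon'(x)$ is $O(1)$, so the whole bracket is bounded by some constant $C_2$ for large $x$, giving $|h^{(2)}(x)|\le C_2 h(x)/x^2$. For $i=3$, the bracket involves in addition $x^2\epsilon^{(2)}(x)$, which is also $O(1)$ by the third line of (\ref{2.3}); all other terms being products of factors controlled by the same three bounds, the whole bracket is again $O(1)$, yielding $|h^{(3)}(x)|\le C_3 h(x)/x^3$.

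There is no genuine obstacle here; the proof is just a one-line computation followed by the triangle inequality together with the three assumptions in (\ref{2.3}). The only thing to be careful about is to state explicitly that the constants $C_i$ can be chosen uniformly for $x$ larger than some threshold $x_0$ (depending on how soon $\epsilon$, $x\epsilon'$, $x^2\epsilon^{(2)}$ settle within their $o(1)$/$O(1)$ bounds), which is harmless for the subsequent asymptotic use.
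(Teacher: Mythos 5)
Your argument is correct and is exactly how the paper intends the corollary to be obtained: the statement is an immediate consequence of the explicit expansions in Lemma~\ref{LEM2.1} together with the three bounds in (\ref{2.3}), which is why the paper gives no separate proof. Your handling of the constants $C_i$ and of the threshold $x_0$ is fine.
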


\begin{corollary}
\label{COR2.2} We have under Case \ref{DEF2.1}, $\hat{x}(t)=\psi(t)\in RV(1/\beta)$ (see Theorem (1.5.15) of \cite{Book1}) and $\hat{\sigma}^{2}(t)=\psi^{\prime}(t)\sim\beta^{-1}\psi(t)/t\in RV(1/\beta-1)$.
\end{corollary}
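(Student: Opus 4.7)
The plan is to unpack the definitions of $\hat{\sigma}^2(t)$ and $\psi(t)$ and apply two standard tools: the inverse function theorem for regular variation (BGT Thm.\ 1.5.15, already cited in the statement), and the first-order expansion of $h'$ supplied by Lemma \ref{LEM2.1}.

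First I would note that since $\epsilon(x)\to 0$, the representation $h(x)=x^{\beta}l(x)$ with $l$ of the form (\ref{Karamata1}) places $h$ in $RV(\beta)$, $\beta>0$. Monotonicity and divergence of $h$ (assumed in Section \ref{SECTION NOTATION HYP}) make $h$ invertible for large arguments, and the cited inverse function theorem yields $\psi=h^{\leftarrow}\in RV(1/\beta)$ directly. This handles the first assertion.

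Next I would identify $\hat{\sigma}^2(t)$ with $\psi'(t)$. By definition $\sigma^2(x)=1/h'(x)$ and $\hat{x}(t)=\psi(t)$, so $\hat{\sigma}^2(t)=1/h'(\psi(t))$. Since $h$ is $C^1$ and strictly increasing with $h'>0$ at large arguments (from Lemma \ref{LEM2.1}, $h'(x)\sim\beta h(x)/x>0$), the ordinary inverse function rule gives $\psi'(t)=1/h'(\psi(t))$, hence $\hat{\sigma}^2(t)=\psi'(t)$.

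Then I would insert the asymptotic from Lemma \ref{LEM2.1}: at $x=\psi(t)$, the identity $h(\psi(t))=t$ gives
\[
\psi'(t)=\frac{1}{h'(\psi(t))}\underset{t\to\infty}{\sim}\frac{\psi(t)}{\beta\,h(\psi(t))}=\frac{\psi(t)}{\beta t}.
\]
This is the claimed equivalent $\beta^{-1}\psi(t)/t$. The final membership $\psi'(t)\in RV(1/\beta-1)$ is then immediate: the ratio $\psi(t)/t$ belongs to $RV(1/\beta)\cdot RV(-1)=RV(1/\beta-1)$ since $\psi\in RV(1/\beta)$, and asymptotic equivalence preserves the class $RV(\cdot)$.

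There is essentially no obstacle here; the only minor care needed is to check that $h'>0$ on the relevant tail so that the inverse function rule applies pointwise, and this is built into Case \ref{DEF2.1} through $\epsilon(x)=o(1)$ combined with $\beta>0$ in the first line of Lemma \ref{LEM2.1}.
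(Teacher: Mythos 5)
Your proof is correct and follows exactly the route the paper intends: the corollary is stated without a separate argument precisely because it is the combination of the Karamata representation placing $h$ in $RV(\beta)$, the inverse-function theorem for regular variation (BGT Theorem 1.5.15) giving $\psi\in RV(1/\beta)$, and the first identity of Lemma \ref{LEM2.1} evaluated at $x=\psi(t)$ with $h(\psi(t))=t$. Nothing is missing.
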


It also holds

\begin{lemma}
\label{LEM2.2} We have under Case \ref{DEF2.2},
\[
\psi^{(2)}(t)\underset{t\rightarrow\infty}{\sim}-\frac{\psi(t)\epsilon
(t)}{t^{2}}\ and\ \psi^{(3)}(t)\underset{t\rightarrow\infty}{\sim}2\frac
{\psi(t)\epsilon(t)}{t^{3}}.
\]
\end{lemma}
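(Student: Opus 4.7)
The plan is to prove both asymptotics by direct differentiation of the Karamata representation (\ref{Karamata 2}) and then reading off the dominant term using the three conditions (\ref{2.4}). No deep ideas are involved; the whole content is a careful asymptotic bookkeeping.

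First, differentiating (\ref{Karamata 2}) once gives the basic identity $\psi'(t)=\psi(t)\epsilon(t)/t$, which will be reused. A second differentiation, using the product and quotient rules, yields
\[
\psi''(t)=\frac{\psi(t)}{t^{2}}\bigl[\epsilon^{2}(t)+t\epsilon'(t)-\epsilon(t)\bigr].
\]
By (\ref{2.4}) one has $\epsilon^{2}(t)=o(\epsilon(t))$ (since $\epsilon\to 0$) and $t\epsilon'(t)=o(\epsilon(t))$ (from the second condition), so the bracket equals $-\epsilon(t)(1+o(1))$ and the first asymptotic follows immediately.

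For $\psi^{(3)}$ I would differentiate the expression just obtained. Denoting the bracket by $B(t)$, one finds
\[
\psi^{(3)}(t)=\frac{\psi'(t)}{t^{2}}B(t)-\frac{2\psi(t)}{t^{3}}B(t)+\frac{\psi(t)}{t^{2}}B'(t),
\]
with $B'(t)=2\epsilon(t)\epsilon'(t)+t\epsilon^{(2)}(t)$. The first piece carries an extra factor $\epsilon(t)$ (via $\psi'/\psi=\epsilon/t$) and is negligible. For the last piece I would invoke the third condition in (\ref{2.4}), $t^{2}\epsilon^{(2)}/\epsilon\to 0$, which forces $t\epsilon^{(2)}(t)=o(\epsilon(t)/t)$; combined with $\epsilon\epsilon'=o(\epsilon^{2}/t)$, this gives $B'(t)=o(\epsilon(t)/t)$, so the last piece is also negligible. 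Only the middle piece survives, and since $B(t)\sim-\epsilon(t)$ one reads off $\psi^{(3)}(t)\sim 2\psi(t)\epsilon(t)/t^{3}$.

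The conditions (\ref{2.4}) were engineered precisely so that every side-term arising in these differentiations (powers of $\epsilon$, derivatives of $\epsilon$) is of smaller order than $\epsilon(t)$ itself, so no delicate cancellation is required. The only point worth checking carefully is the sign and constant in the second asymptotic: the coefficient $+2$ is produced by the derivative of $1/t^{2}$ contributing $-2/t^{3}$, multiplied by the leading $-\epsilon(t)$ of $B(t)$. The main obstacle, if any, is just ensuring no term is overlooked in the bookkeeping — there is no analytic obstacle, as the assumptions (\ref{2.4}) provide all the required control term by term.
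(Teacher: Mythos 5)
Your computation is correct and complete: the identity $\psi'=\psi\epsilon/t$, the bracket $B(t)=\epsilon^2+t\epsilon'-\epsilon\sim-\epsilon$, and the term-by-term estimates for $\psi^{(3)}$ (including the cancellation of the $\epsilon'$ terms in $B'$) all check out against conditions (\ref{2.4}). The paper states this lemma without proof as one of the ``simple expansions'' following from the Karamata representation (\ref{Karamata 2}), and your direct differentiation is exactly the intended argument.
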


\begin{lemma}
\label{LEM2.3} We have under Case \ref{DEF2.2},
\begin{align*}
h^{\prime}(\psi(t))  &  =\frac{1}{\psi^{\prime}(t)}=\frac{t}{\psi
(t)\epsilon(t)},\\
h^{(2)}(\psi(t))  &  =-\frac{\psi^{(2)}(t)}{(\psi^{\prime}(t))^{3}}\underset{t\rightarrow\infty}{\sim}\frac{t}{\psi^{2}(t)\epsilon^{2}(t)},\\
h^{(3)}(\psi(t))  &  =\frac{3(\psi^{(2)}(t))^{2}-\psi^{(3)}(t)\psi^{\prime
}(t)}{(\psi^{\prime}(t))^{5}}\underset{t\rightarrow\infty}{\sim}\frac{t}{\psi^{3}(t)\epsilon^{3}(t)}.
\end{align*}
\end{lemma}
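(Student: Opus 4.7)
The plan is to prove Lemma \ref{LEM2.3} by repeated differentiation of the identity $h(\psi(t))=t$, combined with the asymptotic expressions for $\psi^{(2)}(t)$ and $\psi^{(3)}(t)$ supplied by Lemma \ref{LEM2.2} and the formula $\psi^{\prime}(t)=\psi(t)\epsilon(t)/t$ coming directly from the Karamata representation (\ref{Karamata 2}).

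First I would differentiate $h(\psi(t))=t$ once, yielding $h^{\prime}(\psi(t))\psi^{\prime}(t)=1$, which immediately gives the first identity $h^{\prime}(\psi(t))=1/\psi^{\prime}(t)=t/(\psi(t)\epsilon(t))$. Differentiating the relation $h^{\prime}(\psi(t))=1/\psi^{\prime}(t)$ once more with respect to $t$, and dividing by $\psi^{\prime}(t)$, produces the exact identity
\[
h^{(2)}(\psi(t))=-\frac{\psi^{(2)}(t)}{(\psi^{\prime}(t))^{3}}.
\]
Plugging in $\psi^{\prime}(t)=\psi(t)\epsilon(t)/t$ and $\psi^{(2)}(t)\sim -\psi(t)\epsilon(t)/t^{2}$ from Lemma \ref{LEM2.2}, the ratio simplifies to $t/(\psi^{2}(t)\epsilon^{2}(t))$, which is the asserted equivalent.

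For the third line, I would differentiate the exact identity $h^{(2)}(\psi(t))=-\psi^{(2)}(t)/(\psi^{\prime}(t))^{3}$ once more in $t$ and divide by $\psi^{\prime}(t)$; a short computation gives the exact formula
\[
h^{(3)}(\psi(t))=\frac{3(\psi^{(2)}(t))^{2}-\psi^{(3)}(t)\psi^{\prime}(t)}{(\psi^{\prime}(t))^{5}}.
\]
Using Lemma \ref{LEM2.2}, both $3(\psi^{(2)}(t))^{2}$ and $\psi^{(3)}(t)\psi^{\prime}(t)$ are equivalent to constant multiples of $\psi^{2}(t)\epsilon^{2}(t)/t^{4}$, with coefficients $3$ and $2$ respectively, so their difference is equivalent to $\psi^{2}(t)\epsilon^{2}(t)/t^{4}$. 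Dividing by $(\psi^{\prime}(t))^{5}=\psi^{5}(t)\epsilon^{5}(t)/t^{5}$ yields the claimed $t/(\psi^{3}(t)\epsilon^{3}(t))$.

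The only delicate point is the third-order asymptotic: here one must verify that the cancellation between the leading $\psi^{2}(t)\epsilon^{2}(t)/t^{4}$ contributions of the two terms in the numerator is not total, i.e.\ that the coefficients are $3$ and $2$ rather than equal. This is guaranteed by Lemma \ref{LEM2.2}, provided the second and third conditions in (\ref{2.4}) are used to justify that the lower-order corrections $o(\epsilon(t))$ in $t\psi^{(2)}(t)/\psi(t)$ and in $t^{2}\psi^{(3)}(t)/\psi(t)$ do not disturb the leading equivalents; thus no further asymptotic cancellation occurs and the stated equivalent holds.
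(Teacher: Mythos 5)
The paper states Lemma \ref{LEM2.3} without proof (it is presented among the ``simple expansions'' that follow from the Karamata representation), and your argument supplies exactly the intended derivation: repeated differentiation of $h(\psi(t))=t$ gives the three exact identities, and substituting $\psi^{\prime}(t)=\psi(t)\epsilon(t)/t$ together with the equivalents of Lemma \ref{LEM2.2} gives the asymptotics, the subtraction in the third line being harmless because the leading coefficients $3$ and $2$ differ, so $3c(1+o(1))-2c(1+o(1))=c(1+o(1))$. Your proof is correct and is the natural route; the only superfluous remark is that conditions (\ref{2.4}) are needed to establish Lemma \ref{LEM2.2} itself, not additionally to control the cancellation once its equivalents are granted.
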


\begin{corollary}
\label{COR2.3} We have under Case \ref{DEF2.2}, $\hat{x}(t)=\psi(t)\in RV(0)$ and $\hat{\sigma}^{2}(t)=\psi^{\prime}(t)=\psi(t)\epsilon(t)/t\in RV(-1)$.
Moreover, we have $h^{(i)}(\psi(t))\in RV(1),i=1,2,3$.
\end{corollary}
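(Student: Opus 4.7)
The plan is to verify each of the three assertions of Corollary \ref{COR2.3} by direct inspection, using the Karamata representation (\ref{Karamata 2}) together with the expansions already recorded in Lemma \ref{LEM2.3}. No fresh analytic work is required beyond the algebra of regular-variation indices and the fact that $RV(\alpha)$ is closed under asymptotic equivalence.

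For the first assertion, $\psi(t)\in RV(0)$, observe that (\ref{Karamata 2}) exhibits $\psi(t)=c\exp\int_{1}^{t}\epsilon(u)/u\,du$ with $\epsilon(u)=o(1)$, which is exactly the Karamata representation of a slowly varying function (Theorem 1.3.1 of \cite{Book1}). Differentiating this representation yields $\psi^{\prime}(t)=\psi(t)\epsilon(t)/t$, reproducing the first identity in Lemma \ref{LEM2.3}. Since $\psi\in RV(0)$ and $\epsilon\in RV(0)$ (the latter is noted immediately after (\ref{2.4}) in Case \ref{DEF2.2}), their product lies in $RV(0)$; dividing by $t\in RV(1)$ places $\psi^{\prime}(t)$ in $RV(-1)$, which is the second assertion.

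For the claim $h^{(i)}(\psi(t))\in RV(1)$, $i=1,2,3$, I invoke Lemma \ref{LEM2.3}, which supplies
\[
h^{\prime}(\psi(t))=\frac{t}{\psi(t)\epsilon(t)},\quad h^{(2)}(\psi(t))\underset{t\rightarrow\infty}{\sim}\frac{t}{\psi^{2}(t)\epsilon^{2}(t)},\quad h^{(3)}(\psi(t))\underset{t\rightarrow\infty}{\sim}\frac{t}{\psi^{3}(t)\epsilon^{3}(t)}.
\]
For each $i\in\{1,2,3\}$, the denominator $\psi^{i}(t)\epsilon^{i}(t)$ is a finite product of functions in $RV(0)$ and hence itself belongs to $RV(0)$; dividing $t\in RV(1)$ by such a factor preserves the index $1$. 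Because $RV(1)$ is closed under asymptotic equivalence, the displayed relations transfer this index to $h^{(i)}(\psi(t))$.

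There is no substantial obstacle here: the corollary simply packages together immediate consequences of Lemma \ref{LEM2.3} and the stability of $RV(\alpha)$ under products, quotients, and the relation $\sim$. The only point worth double-checking in writing things out carefully is that the hypothesis $\epsilon\in RV(0)$, which follows from the second and third lines of (\ref{2.4}), is genuinely in force, since without it one could not conclude that $\psi^{i}\epsilon^{i}$ remains slowly varying.
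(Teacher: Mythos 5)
Your proposal is correct and follows exactly the route the paper intends: the corollary is stated without explicit proof as an immediate consequence of the Karamata representation (\ref{Karamata 2}), Lemma \ref{LEM2.3}, and the closure of $RV(\alpha)$ under products, quotients and asymptotic equivalence, which is precisely the index arithmetic you carry out. Nothing is missing.
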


Before beginning the proofs of our results we quote that the regularity
conditions (\ref{Karamata1}) and (\ref{Karamata 2}) pertaining to the
function $h$ allow for the above simple expansions. Substituting the constant
$c$ in (\ref{Karamata1}) and (\ref{Karamata 2}) by functions $x\rightarrow
c(x)$ which converge smoothly to some positive constant $c$ adds noticeable complexity.

We now come to the proofs of five Lemmas which provide the asymptotics leading to Theorem \ref{THM3.1} and Theorem \ref{THM3.2}.

\begin{lemma}
\label{LEM4.1}It holds
\[
\frac{\log\hat{\sigma}}{\int_{1}^{t}\psi(u)du}\underset{t\rightarrow
\infty}{\longrightarrow}0.
\]
\end{lemma}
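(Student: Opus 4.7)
The plan is to bound the numerator from above and the denominator from below in each of the two cases, and observe that the resulting rates are incompatible.

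First, I would rewrite $\hat{\sigma}^{2}=\psi^{\prime}(t)$ so that
\[
\log\hat{\sigma}=\tfrac{1}{2}\log\psi^{\prime}(t).
\]
In Case \ref{DEF2.1}, Corollary \ref{COR2.2} gives $\psi^{\prime}(t)\in RV(1/\beta-1)$, so by Karamata's representation $\log\psi^{\prime}(t)=O(\log t)$, while $\psi(t)\in RV(1/\beta)$ with positive index implies by Karamata's integration theorem that $\int_{1}^{t}\psi(u)du\sim\beta t\psi(t)/(\beta+1)\in RV(1+1/\beta)$. Dividing, the ratio is bounded by a constant multiple of $\log t/t^{1+1/\beta}$, which tends to $0$.

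In Case \ref{DEF2.2}, Corollary \ref{COR2.3} yields $\psi^{\prime}(t)=\psi(t)\epsilon(t)/t\in RV(-1)$, hence
\[
\log\hat{\sigma}=\tfrac{1}{2}\bigl(\log\psi(t)+\log\epsilon(t)-\log t\bigr).
\]
Since $\psi,\epsilon\in RV(0)$, the Karamata representations (\ref{Karamata 2}) and the first line of (\ref{2.4}) give $\log\psi(t)=\int_{1}^{t}\epsilon(u)/u\,du+O(1)=o(\log t)$ and similarly $\log\epsilon(t)=o(\log t)$; Potter bounds make this precise. Therefore $\log\hat{\sigma}=O(\log t)$. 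On the other hand, $\psi(t)\to\infty$ and $\psi$ is eventually monotone, so for $t$ large enough $\psi(t)\geq\psi(T_{0})>0$ and consequently $\int_{1}^{t}\psi(u)du$ grows at least linearly in $t$. The ratio is therefore $O(\log t / t)=o(1)$.

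The only minor obstacle is the second case, where one must be careful that $\log\epsilon(t)$ does not spoil the logarithmic bound on $\log\hat{\sigma}$: this is handled by the Karamata representation implicit in $\epsilon\in RV(0)$ (ensured by the last line of (\ref{2.4})) together with Potter-type upper bounds for slowly varying functions, which force $|\log\epsilon(t)|/\log t\to 0$. Once this is in hand, the two cases combine immediately to yield the claimed limit.
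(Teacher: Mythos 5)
Your argument is correct and follows essentially the same route as the paper: both reduce the claim to comparing regular-variation orders, with Karamata's integration theorem (or, in your Case \ref{DEF2.2}, a cruder linear lower bound) controlling the denominator and the bound $\log\hat\sigma=O(\log t)$ controlling the numerator. One small imprecision: in Case \ref{DEF2.1} the denominator is $t^{1+1/\beta}$ times a slowly varying factor, so the ratio is not literally bounded by a constant multiple of $\log t/t^{1+1/\beta}$; but since any function in $RV(1+1/\beta)$ with $1+1/\beta>0$ dominates $\log t$, the conclusion is unaffected.
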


\begin{proof}
By Corollaries \ref{COR2.2} and \ref{COR2.3}, we have that $\psi(t)\in RV(1/\beta)$ in Case \ref{DEF2.1} and $\psi(t)\in RV(0)$ in Case \ref{DEF2.2}. Using Theorem 1 of \cite{Book3}, Chapter 8.9 or Proposition 1.5.8 of \cite{Book1}, we obtain
\begin{equation}
\label{INT-psi}
\int_{1}^{t}\psi(u)du\underset{t\rightarrow\infty}{\sim}
\left\{
\begin{array}{ll}
t\psi(t)/(1+1/\beta)\in RV(1+1/\beta) &\mbox{ if $h\in RV(\beta)$}\\
t\psi(t)\in RV(1) &\mbox{ if $h$ is rapidly varying}	
\end{array}
\right..
\end{equation}
Also by Corollaries \ref{COR2.2} and \ref{COR2.3}, we have that $\hat{\sigma}^2\in RV(1/\beta-1)$ in Case \ref{DEF2.1} and $\hat{\sigma}^2\in RV(-1)$ in Case \ref{DEF2.2}. Thus $t\mapsto\log\hat{\sigma}\in RV(0)$ by composition and
\[
\frac{\log\hat{\sigma}}{\int_{1}^{t}\psi(u)du}\underset{t\rightarrow\infty}{\sim}
\left\{
\begin{array}{ll}
\frac{\beta+1}{\beta}\times\frac{\log\hat{\sigma}}{t\psi(t)}\in RV\left(-1-\frac{1}{\beta}\right) &\mbox{ if $h\in RV(\beta)$}\\
\frac{\log\hat{\sigma}}{t\psi(t)}\in RV(-1) &\mbox{ if $h$ is rapidly varying}
\end{array}
\right.,
\]
which proves the claim.\\
\end{proof}

The next steps of the proof make use of the function
\[
L(t):=\left(\log t\right)^{3}.
\]

\begin{lemma}
\label{LEM4.2} We have
\[
\sup_{|x|\leq\hat{\sigma}L(t)}\left\vert \frac{h^{(3)}(\hat{x}+x)}%
{h^{(2)}(\hat{x})}\right\vert \hat{\sigma}L^{4}(t)\underset{t\rightarrow
\infty}{\longrightarrow}0.
\]

\end{lemma}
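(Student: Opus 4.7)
The plan is to reduce the supremum to a pointwise asymptotic via the local uniform convergence in Lemma~\ref{LEM4.0}, exploiting that the window $|x|\leq\hat{\sigma}L(t)$ is asymptotically negligible compared with $\hat{x}$. Indeed, in Case~\ref{DEF2.1}, Corollary~\ref{COR2.2} gives $\hat{\sigma}/\hat{x}\sim(\beta t\psi(t))^{-1/2}$, so $\hat{\sigma}L(t)/\hat{x}\to 0$ because $t\psi(t)\in RV(1+1/\beta)$ dominates any power of $\log t$; in Case~\ref{DEF2.2}, $\hat{\sigma}\in RV(-1/2)$ already vanishes while $\hat{x}=\psi(t)\to\infty$.

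For the numerator, in Case~\ref{DEF2.1} I would invoke Corollary~\ref{COR2.1} to write $|h^{(3)}(y)|\leq C_{3}h(y)/y^{3}$ and then apply Lemma~\ref{LEM4.0} to $y\mapsto h(y)/y^{3}\in RV(\beta-3)$ on the shrinking window, yielding $\sup_{|x|\leq\hat{\sigma}L(t)}|h^{(3)}(\hat{x}+x)|\leq C_{3}(t/\hat{x}^{3})(1+o(1))$. In Case~\ref{DEF2.2}, $h^{(3)}$ is rapidly varying in $y$ but $h^{(3)}\circ\psi\in RV(1)$ by Corollary~\ref{COR2.3}; reparametrizing $\hat{x}+x=\psi(s)$ and using the mean value theorem together with $\psi'(t)=\hat{\sigma}^{2}$ show that the window corresponds to $|s-t|\lesssim L(t)/\hat{\sigma}=o(t)$, so Lemma~\ref{LEM4.0} applied in the $s$-variable, combined with Lemma~\ref{LEM2.3}, gives $\sup_{|x|\leq\hat{\sigma}L(t)}|h^{(3)}(\hat{x}+x)|\sim t/(\psi^{3}(t)\epsilon^{3}(t))$.

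Combining these with $|h^{(2)}(\hat{x})|$ and the prefactor $\hat{\sigma}L^{4}(t)$: in Case~\ref{DEF2.1} with $\beta\neq 1$, Lemma~\ref{LEM2.1} yields $|h^{(2)}(\hat{x})|\sim|\beta(\beta-1)|t/\hat{x}^{2}$, so the ratio is $O(1/\hat{x})$ and the full product is $O(L^{4}(t)/\sqrt{\beta t\psi(t)})\to 0$; in Case~\ref{DEF2.2}, Lemma~\ref{LEM2.3} gives $|h^{(2)}(\hat{x})|\sim t/(\psi^{2}\epsilon^{2})$, so the product reads $L^{4}(t)/\sqrt{t\psi(t)\epsilon(t)}\cdot(1+o(1))\to 0$, since $t\psi\epsilon\in RV(1)$ dominates any power of $\log t$. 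The main obstacle is the borderline value $\beta=1$ of Case~\ref{DEF2.1}, where $\beta(\beta-1)=0$ and the generic bound $|h^{(3)}|\leq C_{3}h/y^{3}$ overshoots $|h^{(2)}(\hat{x})|\in RV(\theta)$ when $\theta<-1$; there I would refine the numerator estimate either by isolating the effective contributions $\epsilon$, $x\epsilon'$, $x^{2}\epsilon''$ in the explicit formula of Lemma~\ref{LEM2.1}, or by applying the monotone density theorem to $|h^{(2)}|\in RV(\theta)$, both routes producing the sharper inequality $|h^{(3)}(x)|=O(|h^{(2)}(x)|/x)$ and restoring the ratio bound $O(1/\hat{x})$, from which the same conclusion follows.
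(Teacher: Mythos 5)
Your argument follows the paper's proof essentially step for step: the same reduction via the local uniform convergence of Lemma \ref{LEM4.0} over the window $|x|\le\hat\sigma L(t)=o(\hat x)$, the same bound $|h^{(3)}(x)|\lesssim |h^{(2)}(x)|/x$ in Case \ref{DEF2.1}, the same use of Lemma \ref{LEM2.3} and $h^{(3)}(\psi(\cdot))\in RV(1)$ with a transfer of the window to the $t$-variable in Case \ref{DEF2.2}, and the same concluding regular-variation index computations. If anything you are more explicit than the paper about the borderline situation $\beta=1$, $\theta<-1$: the paper simply asserts $|h^{(3)}(x)|\le C|h^{(2)}(x)|/x$ from Corollary \ref{COR2.1} and (\ref{h^(2)}) without addressing the vanishing of $\beta(\beta-1)$, whereas you flag it and sketch a repair.
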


\begin{proof}
\textit{Case 1}. By Corollary \ref{COR2.1} and
by (\ref{h^(2)}) we have
\[
|h^{(3)}(x)|\leq C\frac{|h^{(2)}(x)|}{x},
\]
for some constant $C$ and $x$ large. Since, by Corollary \ref{COR2.2},
$\hat{x}\in RV(1/\beta)$ and $\hat{\sigma}^{2}\in RV(1/\beta-1)$, we have
\[
\frac{|x|}{\hat{x}}\leq\frac{\hat{\sigma}L(t)}{\hat{x}}\in RV\left(  -\frac
{1}{2}-\frac{1}{2\beta}\right)
\]
and $|x|/\hat{x}\underset{t\rightarrow\infty}{\longrightarrow}0$ uniformly in
$\{x:|x|\leq\hat{\sigma}L(t)\}$. For large $t$ and all $x$ such that
$|x|\leq\hat{\sigma}L(t)$, we have
\[
|h^{(3)}(\hat{x}+x)|\leq C\frac{|h^{(2)}(\hat{x}+x)|}{\hat{x}+x}\leq
C\sup_{|x|\leq\hat{\sigma}L(t)}\frac{|h^{(2)}(\hat{x}+x)|}{\hat{x}+x}%
\]
whence
\[
\sup_{|x|\leq\hat{\sigma}L(t)}|h^{(3)}(\hat{x}+x)|\leq C\sup_{|x|\leq
\hat{\sigma}L(t)}\frac{|h^{(2)}(\hat{x}+x)|}{\hat{x}+x}%
\]
where
\[
\sup_{|x|\leq\hat{\sigma}L(t)}\frac{|h^{(2)}(\hat{x}+x)|}{\hat{x}%
+x}\underset{t\rightarrow\infty}{\sim}\frac{|h^{(2)}(\hat{x})|}{\hat{x}},
\]
using (\ref{LUC1}) for the regularly varying function
$|h^{(2)}(\hat{x})|\in RV(\theta/\beta)$, with $f(t)=\hat{\sigma}L(t)\underset{t\rightarrow\infty}{=}o(\hat{x})$. Thus for $t$ large enough and for all $\delta>0$
\[
\sup_{|x|\leq\hat{\sigma}L^{4}(t)}\left\vert\frac{h^{(3)}(\hat{x}+x)}
{h^{(2)}(\hat{x})}\right\vert \hat{\sigma}L^{4}(t)\leq C\frac{\hat{\sigma
}L^{4}(t)}{\hat{x}}(1+\delta)\in RV\left( \frac{1}{2\beta}-\frac{1}{2}-\frac
{1}{\beta}\right)  ,
\]
which proves Lemma \ref{LEM4.2} in Case \ref{DEF2.1}.\newline

\textit{Case 2}. By Lemma \ref{LEM2.3}, we have
that $h^{(3)}(\psi(t))\in RV(1)$. Using (\ref{LUC2}), we have for $0<a<1$ and $t$ large enough
\[
\sup_{|v|\leq at}|h^{(3)}(\psi(t+v))|\underset{t\rightarrow\infty}{\sim
}(1+a)h^{(3)}(\psi(t)).
\]

In the present case $\hat{x}\in RV(0)$ and $\hat{\sigma}^{2}\in RV(-1)$.
Setting $\psi(t+v)=\hat{x}+x=\psi(t)+x$, we have $x=\psi(t+v)-\psi(t)$ and
$A:=\psi(t-at)-\psi(t)\leq x\leq\psi(t+at)-\psi(t)=:B$, since $t\mapsto\psi(t)$ is an increasing function. It follows that
\[
\sup_{|v|\leq at}h^{(3)}(\psi(t+v))=\sup_{A\leq x\leq B}h^{(3)}(\hat{x}+x).
\]
Now note that (cf. page 127 in \cite{Book1})
\[
B=\psi(t+at)-\psi(t)=\int_{t}^{t+at}\psi^{\prime}(z)dz=\int_{t}^{t+at}%
\frac{\psi(z)\epsilon(z)}{z}dz\underset{t\rightarrow\infty}{\sim}%
\psi(t)\epsilon(t)\log(1+a),
\]
since $\psi(t)\epsilon(t)\in RV(0)$. Moreover, we have
\[
\frac{\hat{\sigma}L(t)}{\psi(t)\epsilon(t)}\in RV(-1)\mbox{ and }\frac
{\hat{\sigma}L(t)}{\psi(t)\epsilon(t)}\underset{t\rightarrow\infty
}{\longrightarrow}0.
\]
It follows that $\hat{\sigma}L(t)\underset{t\rightarrow\infty}{=}o(B)$ and in
a similar way, we have $\hat{\sigma}L(t)\underset{t\rightarrow\infty}{=}o(A)$.
Using Lemma \ref{LEM2.3} and since $\hat{\sigma}L^{4}(t)\in RV(-1/2)$, it
follows that for $t$ large enough and for all $\delta>0$
\begin{align*}
\sup_{|x|\leq\hat{\sigma}L(t)}\frac{|h^{(3)}(\psi(t+v))|}{|h^{(2)}(\psi
(t))|}\hat{\sigma}L^{4}(t)  &  \leq\sup_{A\leq x\leq B}\frac{|h^{(3)}%
(\psi(t+v))|}{|h^{(2)}(\psi(t))|}\hat{\sigma}L^{4}(t)\\
&  \leq(1+a)\frac{\hat{\sigma}L^{4}(t)}{\psi(t)\epsilon(t)}(1+\delta)\in
RV\left(  -\frac{1}{2}\right)  ,
\end{align*}
which concludes the proof of Lemma \ref{LEM4.2} in Case \ref{DEF2.2}.\newline
\end{proof}

\begin{lemma}
\label{LEM4.3} We have
\begin{align*}
&  |h^{(2)}(\hat{x})|\hat{\sigma}^{4}\underset{t\rightarrow\infty
}{\longrightarrow}0,\\
&  |h^{(2)}(\hat{x})|\hat{\sigma}^{3}L(t)\underset{t\rightarrow\infty
}{\longrightarrow}0.
\end{align*}
\end{lemma}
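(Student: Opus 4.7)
The plan is to handle each of the two statements by computing the index of regular variation of the quantity in question and observing that it is strictly negative. Since $L(t)=(\log t)^3$ is slowly varying, it will only affect the index by $0$, so the key work lies in computing the indices of $|h^{(2)}(\hat x)|$ and $\hat\sigma$ from the earlier corollaries.

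\textbf{Case \ref{DEF2.1}.} From Corollary \ref{COR2.2}, $\psi(t)\in RV(1/\beta)$ and $\hat\sigma^{2}=\psi'(t)\in RV(1/\beta-1)$, hence
\[
\hat\sigma\in RV\!\left(\frac{1-\beta}{2\beta}\right).
\]
Hypothesis (\ref{h^(2)}) together with $\psi(t)\in RV(1/\beta)$ gives by composition $|h^{(2)}(\psi(t))|\in RV(\theta/\beta)$ with $\theta\le\beta-2$. Multiplying indices:
\[
|h^{(2)}(\hat x)|\,\hat\sigma^{4}\in RV\!\left(\frac{\theta}{\beta}+\frac{2(1-\beta)}{\beta}\right)=RV\!\left(\frac{\theta+2-2\beta}{\beta}\right),
\]
and under $\theta\le\beta-2$ this index is at most $-1$, hence strictly negative. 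Similarly
\[
|h^{(2)}(\hat x)|\,\hat\sigma^{3}L(t)\in RV\!\left(\frac{\theta}{\beta}+\frac{3(1-\beta)}{2\beta}\right),
\]
and $\theta/\beta+3(1-\beta)/(2\beta)\le(\beta-2)/\beta+3(1-\beta)/(2\beta)=-1/2-1/(2\beta)<0$ for $\beta>0$. Both quantities therefore tend to $0$.

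\textbf{Case \ref{DEF2.2}.} Corollary \ref{COR2.3} gives $\hat\sigma^{2}=\psi'(t)\in RV(-1)$, so $\hat\sigma\in RV(-1/2)$, and Lemma \ref{LEM2.3} gives $h^{(2)}(\psi(t))\in RV(1)$. Consequently
\[
|h^{(2)}(\hat x)|\,\hat\sigma^{4}\in RV(1-2)=RV(-1),\qquad |h^{(2)}(\hat x)|\,\hat\sigma^{3}L(t)\in RV\!\left(1-\tfrac{3}{2}\right)=RV\!\left(-\tfrac{1}{2}\right),
\]
both tending to $0$ as $t\to\infty$.

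There is no genuine obstacle here: the whole argument is a bookkeeping exercise with indices of regular variation. The only point to be slightly careful about is the Case \ref{DEF2.1} bound $\theta\le\beta-2$, which must be invoked to cover the borderline $\beta=1$ possibility (cf. Remark \ref{REM2.1}); in all other situations Remark \ref{REM2.1} forces $\theta=\beta-2$, and the computation above still gives a strictly negative index, so the conclusion is uniform across the sub-cases of Case \ref{DEF2.1}.
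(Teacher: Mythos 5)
Your proof is correct and follows essentially the same route as the paper: in both cases one checks that each quantity is (bounded by) a regularly varying function of strictly negative index and concludes that it tends to zero. The only cosmetic difference is in Case \ref{DEF2.1}, where the paper bounds $|h^{(2)}(\hat{x})|$ via Corollary \ref{COR2.1} (i.e.\ by $C_{2}h(\hat{x})/\hat{x}^{2}=C_{2}t/\psi^{2}(t)$, yielding the explicit majorants $C_{2}/(\beta^{2}t)$ and $C_{2}\beta^{-3/2}L(t)/\sqrt{t\psi(t)}$), whereas you invoke hypothesis (\ref{h^(2)}) and composition to get $|h^{(2)}(\hat{x})|\in RV(\theta/\beta)$ with $\theta\leq\beta-2$ — both give the same negative indices.
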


\begin{proof}
\textit{Case 1}. By Corollary \ref{COR2.1} and Corollary \ref{COR2.2}, we have
\[
|h^{(2)}(\hat{x})|\hat{\sigma}^{4}\leq\frac{C_{2}}{\beta^{2}t}\in RV(-1)
\]
and
\[
|h^{(2)}(\hat{x})|\hat{\sigma}^{3}L(t)\leq\frac{C_{2}}{\beta^{3/2}}\frac
{L(t)}{\sqrt{t\psi(t)}}\in RV\left(  -\frac{1}{2\beta}-\frac{1}{2}\right)  ,
\]
proving the claim.\newline

\textit{Case 2}. We have by Lemma \ref{LEM2.3}
and Corollary \ref{COR2.3}
\[
h^{(2)}(\hat{x})\hat{\sigma}^{4}\underset{t\rightarrow\infty}{\sim}\frac{1}%
{t}\in RV(-1)
\]
and
\[
h^{(2)}(\hat{x})\hat{\sigma}^{3}L(t)\underset{t\rightarrow\infty}{\sim}%
\frac{L(t)}{\sqrt{t\psi(t)\epsilon(t)}}\in RV\left(  -\frac{1}{2}\right)  ,
\]
which concludes the proof of Lemma \ref{LEM4.3}.
\end{proof}

We now define some functions to be used in the sequel.  A Taylor-Lagrange
expansion of $K(x,t)$ in a neighborhood of $\hat{x}$ yields
\begin{equation}
K(x,t)=K(\hat{x},t)-\frac{1}{2}h^{\prime}(\hat{x})(x-\hat{x})^{2}-\frac{1}%
{6}h^{(2)}(\hat{x})(x-\hat{x})^{3}+\varepsilon(x,t),\label{K-decomp}
\end{equation}
where, for some $\theta\in(0,1)$,
\begin{equation}
\varepsilon(x,t)=-\frac{1}{24}h^{(3)}(\hat{x}+\theta(x-\hat{x}))(x-\hat
{x})^{4}.\newline\label{eps}
\end{equation}

\begin{lemma}
\label{LEM4.4} We have
\[
\sup_{y\in\lbrack-L(t),L(t)]}\frac{|\xi(\hat{\sigma}y+\hat{x},t)|}{h^{(2)}(\hat{x})\hat{\sigma}^{3}}\underset{t\rightarrow\infty
}{\longrightarrow}0,
\]
where $\xi(x,t)=\varepsilon(x,t)+q(x)$ and $\varepsilon(x,t)$ is defined in
(\ref{eps}).\newline
\end{lemma}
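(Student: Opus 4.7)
The plan is to split $\xi=\varepsilon+q$ and handle the two pieces separately; both reductions are driven by the uniform-convergence Lemma \ref{LEM4.0}, fed by Lemma \ref{LEM4.2} for the Taylor remainder and by the tail hypotheses (\ref{2.5})--(\ref{2.6}) for the $q$-piece.

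For the remainder, inserting $x=\hat\sigma y+\hat x$ into (\ref{eps}) gives
\[
|\varepsilon(\hat\sigma y+\hat x,t)|=\tfrac{1}{24}|h^{(3)}(\hat x+\theta\hat\sigma y)|\hat\sigma^{4}y^{4}\leq\tfrac{1}{24}|h^{(3)}(\hat x+\theta\hat\sigma y)|\hat\sigma^{4}L^{4}(t)
\]
on $\{|y|\leq L(t)\}$. Dividing by $|h^{(2)}(\hat x)|\hat\sigma^{3}$ and taking the supremum (noting $|\theta\hat\sigma y|\leq\hat\sigma L(t)$) bounds this contribution by $\tfrac{1}{24}\hat\sigma L^{4}(t)\sup_{|z|\leq\hat\sigma L(t)}|h^{(3)}(\hat x+z)|/|h^{(2)}(\hat x)|$, which tends to $0$ by Lemma \ref{LEM4.2}.

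For the $q$-piece in Case \ref{DEF2.1}, the hypothesis $|q|\in RV(\eta)$ together with $\hat\sigma L(t)/\hat x\in RV(-\tfrac{1}{2\beta}-\tfrac{1}{2})=o(1)$ makes (\ref{LUC1}) applicable and yields $\sup_{|z|\leq\hat\sigma L(t)}|q(\hat x+z)|\sim|q(\psi(t))|\in RV(\eta/\beta)$. From Corollary \ref{COR2.2} and (\ref{h^(2)}) the denominator satisfies $|h^{(2)}(\hat x)|\hat\sigma^{3}\in RV\bigl(\tfrac{2\theta+3-3\beta}{2\beta}\bigr)$, so the ratio lies in $RV\bigl(\tfrac{2\eta-2\theta+3\beta-3}{2\beta}\bigr)$; the assumption $\eta<\theta-3\beta/2-3/2$ forces this index to be strictly negative and the ratio vanishes.

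Case \ref{DEF2.2} is the delicate step, because regular variation is imposed on $|q\circ\psi|$ rather than on $q$ itself, so Lemma \ref{LEM4.0} cannot be applied directly in the $x$-variable. The plan is to change variables via $\hat\sigma y+\hat x=\psi(s)$, i.e.\ $s=h(\psi(t)+\hat\sigma y)$; a mean-value estimate based on $h'(\hat x)=1/\hat\sigma^{2}$ together with the control of $h^{(2)}$ in Lemma \ref{LEM2.3} gives $|s-t|=O(L(t)/\hat\sigma)\in RV(1/2)$, which is $o(t)$ uniformly in $|y|\leq L(t)$. Applying (\ref{LUC1}) to $|q\circ\psi|\in RV(\eta)$ in the variable $s$ then yields $\sup|q(\hat\sigma y+\hat x)|\sim|q(\psi(t))|\in RV(\eta)$, and combined with $|h^{(2)}(\hat x)|\hat\sigma^{3}\in RV(-1/2)$ (from Lemma \ref{LEM2.3} and Corollary \ref{COR2.3}, exactly as in the proof of Lemma \ref{LEM4.3}), the ratio has $RV$-index $\eta+1/2<0$ by (\ref{2.6}). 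The main obstacle is thus the uniform justification of the substitution $y\mapsto s(y)=h(\psi(t)+\hat\sigma y)$ in the rapidly varying case; once that is established the rest is index arithmetic.
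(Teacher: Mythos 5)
Your proposal is correct and follows essentially the same route as the paper: the same split $\xi=\varepsilon+q$, the same reduction of the $\varepsilon$-term to Lemma \ref{LEM4.2} via $z=\theta\hat\sigma y$, and the same regular-variation index arithmetic for the $q$-term in both cases. The one step you flag as the main obstacle --- justifying the substitution through $\psi$ in Case \ref{DEF2.2} --- is resolved in the paper not by your mean-value estimate of $|s-t|$ but by reusing the containment argument already set up in Lemma \ref{LEM4.2}, namely that $\hat\sigma L(t)=o\bigl(\psi(t+at)-\psi(t)\bigr)$ so the supremum over $|x|\leq\hat\sigma L(t)$ is dominated by the supremum over $|v|\leq at$, to which (\ref{LUC2}) applies; your variant via (\ref{LUC1}) is a workable alternative but needs the uniform control of $h'$ on the interval, which amounts to the same estimates.
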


\begin{proof}
For $y\in\lbrack-L(t),L(t)]$, by \ref{eps}, it holds
\[
\left\vert \frac{\varepsilon(\hat{\sigma}y+\hat{x},t)}{h^{(2)}(\hat{x})\hat{\sigma}^{3}}\right\vert \leq\left\vert \frac{h^{(3)}(\hat{x}+\theta\hat{\sigma}y)(\hat{\sigma}y)^{4}}{h^{(2)}(\hat{x})\hat{\sigma}^{3}}\right\vert \leq\left\vert \frac{h^{(3)}(\hat{x}+\theta\hat{\sigma}y)\hat{\sigma}L^{4}(t)}{h^{(2)}(\hat{x})}\right\vert ,
\]
with $\theta\in(0,1)$. Let $x=\theta\hat{\sigma}y$. It then holds $|x|\leq
\hat{\sigma}L(t)$. Therefore by Lemma \ref{LEM4.2}
\[
\sup_{y\in\lbrack-L(t),L(t)]}\left\vert\frac{\varepsilon(\hat{\sigma}y+\hat{x},t)|}{h^{(2)}(\hat{x})\hat{\sigma}^{3}}\right\vert \leq\sup_{|x|\leq\hat{\sigma}L(t)}\left\vert\frac{h^{(3)}(\hat{x}+x)}{h^{(2)}(\hat{x})}\hat{\sigma}L^{4}(t)\right\vert\underset{t\rightarrow\infty}{\longrightarrow}0.
\]
It remains to prove that
\begin{equation}
\sup_{y\in\lbrack-L(t),L(t)]}\left\vert \frac{q(\hat{\sigma}y+\hat{x})}{h^{(2)}(\hat{x})\hat{\sigma}^{3}}\right\vert \underset{t\rightarrow
\infty}{\longrightarrow}0.\label{demLemm4.4}
\end{equation}

\textit{Case 1}. By (\ref{h^(2)}) and by composition, $|h^{(2)}(\hat{x})|\in RV(\theta/\beta)$. Using Corollary \ref{COR2.1} we obtain
\[
|h^{(2)}(\hat{x})\hat{\sigma}^{3}|\underset{t\rightarrow\infty}{\sim}\frac{|h^{(2)}(\hat{x})|\psi^{3/2}(t)}{\beta^{3/2}t^{3/2}}\in RV\left(\frac{\theta
}{\beta}+\frac{3}{2\beta}-\frac{3}{2}\right).
\]

Since, by (\ref{2.5}), $|q(\hat{x})|\in RV(\eta/\beta)$, for $\eta
<\theta-3\beta/2+3/2$ and putting $x=\hat{\sigma}y$, we obtain
\begin{align*}
\sup_{y\in\lbrack-L(t),L(t)]}\left\vert \frac{q(\hat{\sigma}y+\hat{x}
)}{h^{(2)}(\hat{x})\hat{\sigma}^{3}}\right\vert  &  =\sup_{|x|\leq\hat{\sigma
}L(t)}\left\vert \frac{q(\hat{x}+x)}{h^{(2)}(\hat{x})\hat{\sigma}^{3}
}\right\vert \\
&  \underset{t\rightarrow\infty}{\sim}\frac{|q(\hat{x})|}{|h^{(2)}(\hat
{x})\hat{\sigma}^{3}|}\in RV\left(  \frac{\eta-\theta}{\beta}-\frac{3}{2\beta
}+\frac{3}{2}\right),
\end{align*}
which proves (\ref{demLemm4.4}).\newline

\textit{Case 2}. By Lemma \ref{LEM2.3} and Corollary \ref{COR2.3}, we have
\[
|h^{(2)}(\hat{x})\hat{\sigma}^{3}|\underset{t\rightarrow\infty}{\sim}\frac
{1}{\sqrt{t\psi(t)\epsilon(t)}}\in RV\left(-\frac{1}{2}\right).
\]

As in Lemma \ref{LEM4.2}, since by (\ref{2.6}), $q(\psi(t))\in RV(\eta)$, then
we obtain, with $\eta<-1/2$
\begin{align*}
\sup_{y\in\lbrack-L(t),L(t)]}\left\vert \frac{q(\hat{\sigma}y+\hat{x}%
)}{h^{(2)}(\hat{x})\hat{\sigma}^{3}}\right\vert  &  =\sup_{|x|\leq\hat{\sigma
}L(t)}\left\vert \frac{q(\hat{x}+x)}{h^{(2)}(\hat{x})\hat{\sigma}^{3}%
}\right\vert \\
&  \leq\sup_{|v|\leq at}\left\vert \frac{q(\psi(t+v))}{h^{(2)}(\hat{x}%
)\hat{\sigma}^{3}}\right\vert \\
&  \leq(1+a)^{\eta}q(\psi(t))\sqrt{t\psi(t)\epsilon(t)}(1+\delta)\in RV\left(
\eta+\frac{1}{2}\right)  ,
\end{align*}
for all $\delta>0$, with $a<1$, $t$ large enough and $\eta+1/2<0$. This proves (\ref{demLemm4.4}).\\
\end{proof}

\begin{lemma}
\label{LEM4.5}For $\alpha\in\mathbb{N}$, denote
\[
\Psi(t,\alpha):=\int_{0}^{\infty}(x-\hat{x})^{\alpha}e^{tx}p(x)dx.
\]
Then
\[
\Psi(t,\alpha)\underset{t\rightarrow\infty}{=}\hat{\sigma}^{\alpha
+1}e^{K(\hat{x},t)}T_{1}(t,\alpha)(1+o(1)),
\]
where
\begin{equation}
T_{1}(t,\alpha)=\int_{-\frac{L^{1/3}(t)}{\sqrt{2}}}^{\frac{L^{1/3}(t)}%
{\sqrt{2}}}y^{\alpha}\exp(-\frac{y^{2}}{2})dy-\frac{h^{(2)}(\hat{x}%
)\hat{\sigma}^{3}}{6}\int_{-\frac{L^{1/3}(t)}{\sqrt{2}}}^{\frac{L^{1/3}%
(t)}{\sqrt{2}}}y^{3+\alpha}\exp(-\frac{y^{2}}{2})dy. \label{4.1}%
\end{equation}

\end{lemma}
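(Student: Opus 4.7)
The plan is to substitute the Taylor--Lagrange expansion (\ref{K-decomp}) into the integrand defining $\Psi(t,\alpha)$ and to change variables via $y=(x-\hat{x})/\hat{\sigma}$. Using $h'(\hat{x})=1/\hat{\sigma}^{2}$ and $\xi=\varepsilon+q$, this yields
\[
\Psi(t,\alpha)=\hat{\sigma}^{\alpha+1}e^{K(\hat{x},t)}\int_{-\hat{x}/\hat{\sigma}}^{\infty}y^{\alpha}e^{-y^{2}/2}\exp\!\left(-\frac{h^{(2)}(\hat{x})\hat{\sigma}^{3}}{6}y^{3}+\xi(\hat{\sigma}y+\hat{x},t)\right)dy,
\]
so the task reduces to showing that this residual integral equals $T_{1}(t,\alpha)(1+o(1))$.

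To do so, I split the $y$-range into the bulk $B_{t}=[-L^{1/3}(t)/\sqrt{2},L^{1/3}(t)/\sqrt{2}]$, the middle strip $M_{t}=\{y:L^{1/3}(t)/\sqrt{2}<|y|\leq L(t)\}$, and the tail $T_{t}=\{y>-\hat{x}/\hat{\sigma}:|y|>L(t)\}$. On $B_{t}$, Lemma~\ref{LEM4.3} gives $|h^{(2)}(\hat{x})\hat{\sigma}^{3}y^{3}|\leq |h^{(2)}(\hat{x})|\hat{\sigma}^{3}L(t)\to 0$ uniformly, and Lemma~\ref{LEM4.4} gives $\xi(\hat{\sigma}y+\hat{x},t)=o(h^{(2)}(\hat{x})\hat{\sigma}^{3})$ uniformly. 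Setting $u=-h^{(2)}(\hat{x})\hat{\sigma}^{3}y^{3}/6+\xi(\hat{\sigma}y+\hat{x},t)$ and expanding $e^{u}=1+u+O(u^{2})$, the constant and cubic pieces of $u$ reproduce exactly $T_{1}(t,\alpha)$; the $\xi$ piece of $u$ integrates to $o(h^{(2)}(\hat{x})\hat{\sigma}^{3})$, and the $O(u^{2})$ remainder contributes a term of order $(h^{(2)}(\hat{x})\hat{\sigma}^{3})^{2}$. Both are $o(T_{1}(t,\alpha))$: for even $\alpha$, $T_{1}\to M_{\alpha}\neq 0$; for odd $\alpha$, the leading Gaussian integral vanishes by symmetry so that $T_{1}$ has exact order $h^{(2)}(\hat{x})\hat{\sigma}^{3}$, and the same symmetry kills the $y^{6+\alpha}$ part of the $u^{2}$-integral.

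On the middle strip $M_{t}$, Lemma~\ref{LEM4.3} again yields $|h^{(2)}(\hat{x})\hat{\sigma}^{3}y^{3}/6|\leq y^{2}/4$ for $t$ large (since $|h^{(2)}(\hat{x})|\hat{\sigma}^{3}L(t)\to 0$ and $|y|\leq L(t)$), while $|\xi|=o(1)$, so the integrand is dominated by $|y|^{\alpha}e^{-y^{2}/4}$ and this contribution is $O(e^{-cL^{2/3}(t)})$, negligible relative to $T_{1}(t,\alpha)$. The tail $T_{t}$ is the delicate piece, because the expansion (\ref{K-decomp}) is no longer under control there; I plan to handle it by returning to the raw form $(x-\hat{x})^{\alpha}e^{K(x,t)+q(x)}$ and combining the concavity of $K(\cdot,t)$ with the regular or rapid variation of $h$ (via Lemmas~\ref{LEM2.1} and \ref{LEM2.3}) to derive a quantitative quadratic decay $K(x,t)-K(\hat{x},t)\leq -\kappa\hat{\sigma}^{-2}(x-\hat{x})^{2}$ on a neighborhood of $\hat{x}$ wide enough to contain $|x-\hat{x}|\leq\hat{\sigma}L(t)$, together with the super-linear decay beyond that comes from $g(x)/x\to\infty$ in (\ref{2.2}), while absorbing $q(x)$ via (\ref{2.5}) or (\ref{2.6}). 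Producing this quantitative bound and matching it to the threshold $\hat{\sigma}L(t)$ so that the tail integral is $o(\hat{\sigma}^{\alpha+1}e^{K(\hat{x},t)})$ is the main obstacle; once this is in place, the three regional estimates combine to give the claim.
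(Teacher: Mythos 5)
Your treatment of the central region is essentially the paper's Step 2 (Taylor--Lagrange expansion of $K$, the substitution $y=(x-\hat x)/\hat\sigma$, first-order expansion of the exponential, with Lemmas \ref{LEM4.3} and \ref{LEM4.4} controlling the cubic term and $\xi$), and your extra ``middle strip'' up to $|y|\leq L(t)$ is a harmless variant since Lemma \ref{LEM4.4} is uniform on that range. But the proof is not complete: the tail region is exactly the part you defer, and you yourself call producing the required bound there ``the main obstacle.'' That obstacle is the substance of the lemma. The paper resolves it with the Three Chords Lemma: concavity of $x\mapsto K(x,t)$ gives, for all $x$ outside $\hat x+I_t$, the linear bound $K(x,t)-K(\hat x,t)\leq -\tfrac{\sqrt2}{4}\tfrac{L^{1/3}(t)}{\hat\sigma}|x-\hat x|$ obtained by comparing with the chord through the boundary points $\hat x\pm L^{1/3}(t)\hat\sigma/\sqrt2$, where a third-order Taylor expansion (valid there by Lemma \ref{LEM4.3}) evaluates the slope. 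This yields an explicit exponential-integral estimate $|\Psi_3|\lesssim e^{K(\hat x,t)}\hat\sigma^{\alpha+1}\exp(-\tfrac14 L^{2/3}(t))L^{(\alpha-1)/3}(t)$. Your proposed alternative (a global quadratic decay $-\kappa\hat\sigma^{-2}(x-\hat x)^2$ plus superlinear decay beyond) is plausible in spirit but is nowhere derived, and without it the lemma is unproved.

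Two further points make the gap concrete. First, your stated target for the tail, $o(\hat\sigma^{\alpha+1}e^{K(\hat x,t)})$, is too weak for odd $\alpha$: there $T_1(t,\alpha)\sim -\tfrac{1}{6}h^{(2)}(\hat x)\hat\sigma^3\sqrt{2\pi}M_{\alpha+3}\to 0$ at a regularly varying (polynomial) rate, so the tail must be $o(\hat\sigma^{\alpha+1}e^{K(\hat x,t)}\,h^{(2)}(\hat x)\hat\sigma^3)$. The paper gets this only because its tail bound decays like $\exp(-\tfrac14(\log t)^2)$, which beats the power $t^{\rho}L_1(t)=|h^{(2)}(\hat x)\hat\sigma^3|$; your plan never compares the tail against this vanishing quantity. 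Second, your tail region runs all the way down to $x=0$, but all the controls you invoke ($h^{(2)}$, $h^{(3)}$, the regular variation of $q$) are only asymptotic, i.e.\ valid for large arguments. The paper therefore splits off $\{0<x<\tau\}$ separately and bounds $|\Psi_1|\leq d\,\hat x^{\alpha}e^{t\tau}/t$, showing this is negligible precisely via Lemma \ref{LEM4.1} (comparing $\log\hat\sigma$ with $\int_1^t\psi$), the lemma the paper flags as instrumental; your plan omits this region and never uses Lemma \ref{LEM4.1}, so the near-origin contribution is unaccounted for.
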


\begin{proof}
We define the interval $I_{t}$ as follows
\[
I_{t}:=\left(  -\frac{L^{\frac{1}{3}}(t)\hat{\sigma}}{\sqrt{2}},\frac
{L^{\frac{1}{3}}(t)\hat{\sigma}}{\sqrt{2}}\right)  .
\]

For large enough $\tau$, when $t\rightarrow\infty$ we can partition
$\mathbb{R}_{+}$ into
\[
\mathbb{R}_{+}=\{x:0<x<\tau\}\cup\{x:x\in\hat{x}+I_{t}\}\cup\{x:x\geq
\tau,x\not \in \hat{x}+I_{t}\},
\]
where for $x>\tau$, $q(x)<\log2$. Thus we have
\begin{equation}
p(x)<2e^{-g(x)}. \label{4.2}
\end{equation}
For fixed $\tau$, $\{x:0<x<\tau\}\cap\{x:x\in\hat{x}+I_{t}%
\}=\emptyset$. Therefore $\tau<\hat{x}-\frac{L^{\frac{1}{3}}(t)\hat{\sigma}}{\sqrt{2}}\leq\hat{x}$ for $t$ large enough. Hence it holds
\begin{equation}
\Psi(t,\alpha)=:\Psi_{1}(t,\alpha)+\Psi_{2}(t,\alpha)+\Psi_{3}(t,\alpha),
\label{4.3}
\end{equation}
where
\begin{align*}
\Psi_{1}(t,\alpha)  &  =\int_{0}^{\tau}(x-\hat{x})^{\alpha}e^{tx}p(x)dx,\\
\Psi_{2}(t,\alpha)  &  =\int_{x\in\hat{x}+I_{t}}(x-\hat{x})^{\alpha}
e^{tx}p(x)dx,\\
\Psi_{3}(t,\alpha)  &  =\int_{x\not \in \hat{x}+I_{t},x\geq\tau}(x-\hat
{x})^{\alpha}e^{tx}p(x)dx.
\end{align*}
We estimate $\Psi_{1}(t,\alpha)$, $\Psi_{2}(t,\alpha)$ and $\Psi_{3}%
(t,\alpha)$ in Step 1, Step 2 and Step 3.\newline

\textit{Step 1:} Since $q$ is bounded, we consider
\[
\log d=\sup_{x\in(0, \tau)}q(x)
\]
and for $t$ large enough, we have
\[
\left\vert\Psi_{1}(t,\alpha)\right\vert \leq\int_{0}^{\tau}\left\vert
x-\hat{x}\right\vert ^{\alpha}e^{tx}p(x)dx\leq d\int_{0}^{\tau}\hat
{x}^{\alpha}e^{tx}dx,
\]
since when $0<x<\tau<\hat{x}$ then $\left\vert x-\hat{x}\right\vert =\hat
{x}-x<\hat{x}$ for $t$ large enough and $g$ is positive.\newline Since, for
$t$ large enough, we have
\[
\int_{0}^{\tau}\hat{x}^{\alpha}e^{tx}dx=\hat{x}^{\alpha}\frac{e^{t\tau}}
{t}-\frac{\hat{x}^{\alpha}}{t}\leq\hat{x}^{\alpha}\frac{e^{t\tau}}{t},
\]
we obtain
\begin{equation}
\left\vert\Psi_{1}(t,\alpha)\right\vert\leq d\hat{x}^{\alpha}\frac{e^{t\tau
}}{t}. \label{4.4}
\end{equation}

We now show that for $h\in\mathcal{R}$, it holds
\begin{equation}
\hat{x}^{\alpha}\frac{e^{t\tau}}{t}\underset{t\rightarrow\infty}{=}o(\vert
\hat{\sigma}^{\alpha+1}\vert e^{K(\hat{x},t)}\vert h^{(2)}(\hat{x})\hat
{\sigma}^{3}\vert), \label{4.5}
\end{equation}
with $K(x,t)$ defined as in (\ref{K}). This is equivalent to
\[
\frac{\hat{x}^{\alpha}e^{t\tau}}{t\vert\hat{\sigma}^{\alpha+4}h^{(2)}(\hat
{x})\vert}\underset{t\rightarrow\infty}{=}o(e^{K(\hat{x},t)}),
\]
which is implied by
\begin{equation}
-(\alpha+4)\log\vert\hat{\sigma}\vert-\log t+\alpha\log\hat{x}+\tau
t-\log\vert h^{(2)}(\hat{x})\vert\underset{t\to\infty}{=}o(K(\hat{x},t)),
\label{4.6}%
\end{equation}
if $K(\hat{x},t)\underset{t\rightarrow\infty}{\longrightarrow}\infty$.

Setting $u=h(v)$ in $\int_{1}^{t}\psi(u)du$, we have
\[
\int_{1}^{t}\psi(u)du=t\psi(t)-\psi(1)-g(\psi(t))+g(\psi(1)).
\]
Since $K(\hat{x},t)=t\psi(t)-g(\psi(t))$, we obtain
\begin{equation}
K(\hat{x},t)=\int_{1}^{t}\psi(u)du+\psi(1)-g(\psi(1))\underset{t\rightarrow
\infty}{\sim}\int_{1}^{t}\psi(u)du. \label{4.7}
\end{equation}

Let us denote (\ref{INT-psi}) by
\begin{equation}
K(\hat{x},t)\underset{t\rightarrow\infty}{\sim}at\psi(t), \label{4.9}
\end{equation}
with
\[
a=\left\{
\begin{array}
[c]{ll}%
\frac{\beta}{\beta+1} & \mbox{ if $h\in RV(\beta)$}\\
1 & \mbox{ if $h$ is rapidly varying}
\end{array}
\right..
\]
We have to show that each term in (\ref{4.6}) is $o(K(\hat{x},t))$.

\begin{enumerate}
\item[1.] By Lemma \ref{LEM4.1}, $\log\hat{\sigma}\underset{t\rightarrow\infty
}{=}o(\int_{1}^{t}\psi(u)du)$. Hence $\log\hat{\sigma}\underset{t\rightarrow
\infty}{=}o(K(\hat{x},t))$.

\item[2.] By Corollary \ref{COR2.2} and Corollary \ref{COR2.3}, we have
\[
\frac{t}{K(\hat{x},t)}\underset{t\rightarrow\infty}{\sim}\frac{1}{a\psi
(t)}\underset{t\rightarrow\infty}{\longrightarrow}0.
\]
Thus $t\underset{t\rightarrow\infty}{=}o(K(\hat{x},t))$.

\item[3.] Since $\hat{x}=\psi(t)\underset{t\rightarrow\infty}{\longrightarrow
}\infty$, it holds
\[
\left\vert \frac{\log\hat{x}}{K(\hat{x},t)}\right\vert \leq C\frac{\psi
(t)}{K(\hat{x},t)},
\]
for some positive constant $C$ and $t$ large enough. Moreover by (\ref{4.9}),
we have
\[
\frac{\psi(t)}{K(\hat{x},t)}\underset{t\rightarrow\infty}{\sim}\frac{1}%
{at}\underset{t\rightarrow\infty}{\longrightarrow}0.
\]
Hence $\log\hat{x}\underset{t\rightarrow\infty}{=}o(K(\hat{x},t))$.

\item[4.] Using (\ref{4.9}), $\log\vert h^{(2)}(\hat{x}
)\vert\in RV(0)$ and $\log\vert h^{(2)}(\hat{x})\vert\underset{t\rightarrow\infty
}{=}o(K(\hat{x},t))$.

\item[5.] Since $\log t\underset{t\rightarrow\infty}{=}o(t)$ and
$t\underset{t\rightarrow\infty}{=}o(K(\hat{x},t))$, we obtain $\log
t\underset{t\rightarrow\infty}{=}o(K(\hat{x},t))$.
\end{enumerate}

Since (\ref{4.6}) holds and $K(\hat{x},t)\underset{t\rightarrow\infty
}{\longrightarrow}\infty$ by (\ref{4.7}) and (\ref{4.9}), we then get
(\ref{4.5}).\newline

(\ref{4.4}) and (\ref{4.5}) yield together
\begin{equation}
\left\vert \Psi_{1}(t,\alpha)\right\vert \underset{t\rightarrow\infty
}{=}o(\vert\hat{\sigma}^{\alpha+1}\vert e^{K(\hat{x},t)}\vert h^{(2)}(\hat
{x})\hat{\sigma}^{3}\vert). \label{4.10}%
\end{equation}

When $\alpha$ is even,
\begin{equation}
T_{1}(t,\alpha)=\int_{-\frac{t^{1/3}}{\sqrt{2}}}^{\frac{t^{1/3}}{\sqrt{2}}%
}y^{\alpha}\exp(-\frac{y^{2}}{2})dy\underset{t\rightarrow\infty}{\sim}%
\sqrt{2\pi}M_{\alpha},\label{T1pair}
\end{equation}
where $M_{\alpha}$ is the moment of order $\alpha$ of a standard normal
distribution. Thus by Lemma \ref{LEM4.3} we have
\begin{equation}
\frac{h^{(2)}(\hat{x})\hat{\sigma}^{3}}{T_{1}(t,\alpha)}\underset{t\rightarrow\infty}{\longrightarrow}0.\label{resT1pair}
\end{equation}

When $\alpha$ is odd,
\begin{equation}
T_{1}(t,\alpha)=-\frac{h^{(2)}(\hat{x})\hat{\sigma}^{3}}{6}\int_{-\frac
{l^{\frac{1}{3}}}{\sqrt{2}}}^{\frac{l^{\frac{1}{3}}}{\sqrt{2}}}y^{3+\alpha
}\exp\left(-\frac{y^{2}}{2}\right)dy\underset{t\rightarrow\infty}{\sim
}-\frac{h^{(2)}(\hat{x})\hat{\sigma}^{3}}{6}\sqrt{2\pi}M_{\alpha+3},\label{T1impair}
\end{equation}
where $M_{\alpha+3}$ is the moment of order $\alpha+3$ of a standard normal
distribution. Thus we have
\begin{equation}
\frac{h^{(2)}(\hat{x})\hat{\sigma}^{3}}{T_{1}(t,\alpha)}\underset{t\rightarrow
\infty}{\sim}-\frac{6}{\sqrt{2\pi}M_{\alpha+3}}.\label{resT1impair}
\end{equation}

Combined with (\ref{4.10}), (\ref{resT1pair}) and (\ref{resT1impair}) imply for $\alpha\in\mathbb{N}$
\begin{equation}
\left\vert \Psi_{1}(t,\alpha)\right\vert \underset{t\rightarrow\infty
}{=}o(\hat{\sigma}^{\alpha+1}e^{K(\hat{x},t)}T_{1}(t,\alpha)). \label{4.11}%
\end{equation}

\vspace{0.4cm}

\textit{Step 2:} By (\ref{2.1}) and (\ref{K-decomp})
\begin{align*}
\Psi_{2}(t,\alpha)  &  =\int_{x\in\hat{x}+I_{t}}(x-\hat{x})^{\alpha
}e^{K(x,t)+q(x)}dx\\
&  =\int_{x\in\hat{x}+I_{t}}(x-\hat{x})^{\alpha}e^{K(\hat{x},t)-\frac{1}
{2}h^{\prime}(\hat{x})(x-\hat{x})^{2}-\frac{1}{6}h^{(2)}(\hat{x})(x-\hat
{x})^{3}+\xi(x,t)}dx,
\end{align*}
where $\xi(x,t)=\varepsilon(x,t)+q(x)$. Making the substitution $y=(x-\hat
{x})/\hat{\sigma}$, it holds
\begin{equation}
\label{4.12}
\Psi_{2}(t,\alpha)=\hat{\sigma}^{\alpha+1}e^{K(\hat{x},t)}
\int_{-\frac{L^{\frac{1}{3}}(t)}{\sqrt{2}}}^{\frac{L^{\frac{1}{3}}(t)}
{\sqrt{2}}}y^{\alpha}\exp\left(  -\frac{y^{2}}{2}-\frac{\hat{\sigma}^{3}y^{3}%
}{6}h^{(2)}(\hat{x})+\xi(\hat{\sigma}y+\hat{x},t)\right)  dy,
\end{equation}
since $h^{\prime}(\hat{x})=1/\hat{\sigma}^{2}$.\newline

On $\left\lbrace y: y\in\left(-L^{\frac{1}{3}}(t)/\sqrt{2},L^{\frac{1}{3}}(t)/\sqrt{2}\right)\right\rbrace$, by Lemma \ref{LEM4.3}, we have
\[
\left\vert h^{(2)}(\hat{x})\hat{\sigma}^{3}y^{3}\right\vert \leq\left\vert
h^{(2)}(\hat{x})\hat{\sigma}^{3}L(t)\right\vert /2^{\frac{3}{2}}
\underset{t\rightarrow\infty}{\longrightarrow}0.
\]
Perform the first order Taylor expansion
\[
\exp\left(  -\frac{h^{(2)}(\hat{x})\hat{\sigma}^{3}}{6}y^{3}+\xi(\hat{\sigma
}y+\hat{x},t)\right)  \underset{t\rightarrow\infty}{=}1-\frac{h^{(2)}(\hat
{x})\hat{\sigma}^{3}}{6}y^{3}+\xi(\hat{\sigma}y+\hat{x},t)+o_{1}(t,y),
\]
where
\begin{equation}
o_{1}(t,y)=o\left(  -\frac{h^{(2)}(\hat{x})\hat{\sigma}^{3}}{6}y^{3}+\xi
(\hat{\sigma}y+\hat{x},t)\right)  . \label{4.13}%
\end{equation}

We obtain
\[
\int_{-\frac{L^{\frac{1}{3}}(t)}{\sqrt{2}}}^{\frac{L^{\frac{1}{3}}(t)}%
{\sqrt{2}}}y^{\alpha}\exp\left(  -\frac{y^{2}}{2}-\frac{\hat{\sigma}^{3}y^{3}%
}{6}h^{(2)}(\hat{x})+\xi(\hat{\sigma}y+\hat{x},t)\right)  dy =: T_{1}%
(t,\alpha)+T_{2}(t,\alpha),
\]
where $T_{1}(t,\alpha)$ is defined in (\ref{4.1}) and
\begin{equation}
T_{2}(t,\alpha):=\int_{-\frac{L^{\frac{1}{3}}(t)}{\sqrt{2}}}^{\frac
{L^{\frac{1}{3}}(t)}{\sqrt{2}}}\left(  \xi(\hat{\sigma}y+\hat{x}%
,t)+o_{1}(t,y)\right)  y^{\alpha}e^{-\frac{y^{2}}{2}}dy.
\end{equation}

Using (\ref{4.13}) we have for $t$ large enough
\begin{align*}
\left\vert T_{2}(t,\alpha)\right\vert  &  \leq\sup_{y\in\lbrack-\frac
{L^{\frac{1}{3}}(t)}{\sqrt{2}},\frac{L^{\frac{1}{3}}(t)}{\sqrt{2}}]}\left\vert
\xi(\hat{\sigma}y+\hat{x},t)\right\vert \int_{-\frac{L^{\frac{1}{3}}(t)}%
{\sqrt{2}}}^{\frac{L^{\frac{1}{3}}(t)}{\sqrt{2}}}\left\vert y\right\vert
^{\alpha}e^{-\frac{y^{2}}{2}}dy\\
&  \hspace{0.5cm}+\int_{-\frac{L^{\frac{1}{3}}(t)}{\sqrt{2}}}^{\frac
{L^{\frac{1}{3}}(t)}{\sqrt{2}}}\left(  \left\vert o\left(  \frac{h^{(2)}%
(\hat{x})\hat{\sigma}^{3}}{6}y^{3}\right)  \right\vert +\left\vert o(\xi
(\hat{\sigma}y+\hat{x},t))\right\vert \right)  |y|^{\alpha}e^{-\frac{y^{2}}%
{2}}dy,
\end{align*}
where $\sup_{y\in\lbrack-L^{\frac{1}{3}}(t)/\sqrt{2},L^{\frac{1}{3}}%
(t)/\sqrt{2}]}\left\vert \xi(\hat{\sigma}y+\hat{x},t)\right\vert \leq
\sup_{y\in\lbrack-L(t),L(t)]}\left\vert \xi(\hat{\sigma}y+\hat{x}%
,t)\right\vert $ since $L^{\frac{1}{3}}(t)/\sqrt{2}\leq L(t)$ holds for $t$
large enough. Thus
\begin{align*}
\left\vert T_{2}(t,\alpha)\right\vert  &  \leq2\sup_{y\in\lbrack
-L(t),L(t)]}\left\vert \xi(\hat{\sigma}y+\hat{x},t)\right\vert \int%
_{-\frac{L^{\frac{1}{3}}(t)}{\sqrt{2}}}^{\frac{L^{\frac{1}{3}}(t)}{\sqrt{2}}%
}\left\vert y\right\vert ^{\alpha}e^{-\frac{y^{2}}{2}}dy\\
&  \hspace{1cm}+\left\vert o\left(  \frac{h^{(2)}(\hat{x})\hat{\sigma}^{3}}%
{6}\right)  \right\vert \int_{-\frac{L^{\frac{1}{3}}(t)}{\sqrt{2}}}%
^{\frac{L^{\frac{1}{3}}(t)}{\sqrt{2}}}\left\vert y\right\vert ^{3+\alpha
}e^{-\frac{y^{2}}{2}}dy\\
&  \underset{t\rightarrow\infty}{=}\left\vert o\left(  \frac{h^{(2)}(\hat
{x})\hat{\sigma}^{3}}{6}\right)  \right\vert \left(  \int_{-\frac{L^{\frac
{1}{3}}(t)}{\sqrt{2}}}^{\frac{L^{\frac{1}{3}}(t)}{\sqrt{2}}}\left\vert
y\right\vert ^{\alpha}e^{-\frac{y^{2}}{2}}dy+\int_{-\frac{L^{\frac{1}{3}}%
(t)}{\sqrt{2}}}^{\frac{L^{\frac{1}{3}}(t)}{\sqrt{2}}}\left\vert y\right\vert
^{3+\alpha}e^{-\frac{y^{2}}{2}}dy\right)  ,
\end{align*}
where the last equality holds from Lemma \ref{LEM4.4}. Since the integrals in
the last equality are both bounded, it holds
\begin{equation}
T_{2}(t,\alpha)\underset{t\rightarrow\infty}{=}o(h^{(2)}(\hat{x})\hat{\sigma
}^{3}).
\end{equation}

When $\alpha$ is even, using (\ref{T1pair}) and Lemma \ref{LEM4.3}
\begin{equation}
\left\vert \frac{T_{2}(t,\alpha)}{T_{1}(t,\alpha)}\right\vert \leq\frac{\vert
h^{(2)}(\hat{x})\hat{\sigma}^{3}\vert}{\sqrt{2\pi}M_{\alpha}}
\underset{t\rightarrow\infty}{\longrightarrow}0.\label{T2a}
\end{equation}

When $\alpha$ is odd, using (\ref{T1impair}), we get
\begin{equation}
\frac{T_{2}(t,\alpha)}{T_{1}(t,\alpha)}\underset{t\rightarrow\infty}{=}%
-\frac{6}{\sqrt{2\pi}M_{\alpha+3}}o(1)\underset{t\rightarrow\infty
}{\longrightarrow}0.\label{T2b}
\end{equation}

Now with $\alpha\in\mathbb{N}$, by (\ref{T2a}) and (\ref{T2b})
\[
T_{2}(t,\alpha)\underset{t\rightarrow\infty}{=}o(T_{1}(t,\alpha)),
\]
which, combined with (\ref{4.12}), yields
\begin{equation}
\Psi_{2}(t,\alpha)=c\hat{\sigma}^{\alpha+1}e^{K(\hat{x},t)}T_{1}%
(t,\alpha)(1+o(1)). \label{4.14}%
\end{equation}

\vspace{0.4cm}

\textit{Step 3:} The Three Chords Lemma implies, for $x\mapsto K(x,t)$ concave and $(x,y,z)\in\mathbb{R}_{+}^{3}$ such that $x<y<z$
\begin{equation}
\frac{K(y,t)-K(z,t)}{y-z}\leq\frac{K(x,t)-K(z,t)}{x-z}\leq\frac{K(x,t)-K(y,t)}{x-y}. \label{4.15}
\end{equation}

Since $x\mapsto K(x,t)$ is concave and attains its maximum in $\hat{x}$, we
consider two cases: $x<\hat{x}$ and $x\geq\hat{x}$. After some calculus using (\ref{4.15}) in each case, we get
\begin{equation}
K(x,t)-K(\hat{x},t)\leq\frac{K(\hat{x}+sgn(x-\hat{x})\frac{L^{1/3}
(t)\hat{\sigma}}{\sqrt{2}})-K(\hat{x},t)}{sgn(x-\hat{x})\frac{L^{1/3}
(t)\hat{\sigma}}{\sqrt{2}}}(x-\hat{x}), \label{4.16}
\end{equation}
where
\begin{align*}
sgn(x-\hat{x})= \left\{
\begin{array}{ll}
1 & \mbox{if $x\geq \hat{x}$}\\
-1 & \mbox{if $x<\hat{x}$}
\end{array}
\right..
\end{align*}

Using Lemma \ref{LEM4.3}, a third-order Taylor expansion in the numerator of
(\ref{4.16}) gives
\[
K(\hat{x}+sgn(x-\hat{x})\frac{L^{1/3}(t)\hat{\sigma}}{\sqrt{2}})-K(\hat
{x},t)\leq-\frac{1}{4}h^{\prime}(\hat{x})L^{2/3}(t)\hat{\sigma}^{2}=-\frac
{1}{4}L^{2/3}(t),
\]
which yields
\[
K(x,t)-K(\hat{x},t)\leq-\frac{\sqrt{2}}{4}\frac{L^{1/3}(t)}{\hat{\sigma}}\vert
x-\hat{x}\vert.
\]

Using (\ref{4.2}), we obtain for large enough fixed $\tau$
\begin{align*}
\vert\Psi_{3}(t,\alpha)\vert &  \leq 2\int_{x\not \in \hat{x}+I_{t},x>\tau
}\vert x-\hat{x}\vert^{\alpha}e^{K(x,t)}dx\\
&  \leq 2e^{K(\hat{x},t)}\int_{\vert x-\hat{x}\vert>\frac{L^{1/3}
(t)\hat{\sigma}}{\sqrt{2}},x>\tau}\vert x-\hat{x}\vert^{\alpha}\exp\left(
-\frac{\sqrt{2}}{4}\frac{L^{1/3}(t)}{\hat{\sigma}}\vert x-\hat{x}\vert\right)
dx\\
&  =2e^{K(\hat{x},t)}\hat{\sigma}^{\alpha+1}\left[  \int_{\frac{L^{1/3}
(t)}{\sqrt{2}}}^{+\infty}y^{\alpha}e^{-\frac{\sqrt{2}}{4}L^{1/3}(t)y}
dy+\int_{\frac{\tau-\hat{x}}{\hat{\sigma}}}^{-\frac{L^{1/3}(t)}{\sqrt{2}}
}(-y)^{\alpha}e^{\frac{\sqrt{2}}{4}L^{1/3}(t)y}dy\right] \\
&  := 2e^{K(\hat{x},t)}\hat{\sigma}^{\alpha+1}(I_{\alpha}+J_{\alpha}).
\end{align*}

It is easy but a bit tedious to show by recursion that
\begin{align*}
I_{\alpha}  &  =\int_{\frac{L^{1/3}(t)}{\sqrt{2}}}^{+\infty}y^{\alpha}%
\exp\left(  -\frac{\sqrt{2}}{4}L^{1/3}(t)y\right)  dy\\
&  =\exp(-\frac{1}{4}L^{2/3}(t))\sum_{i=0}^{\alpha}2^{\frac{4i+3-\alpha}{2}%
}L^{\frac{\alpha-(2i+1)}{3}}(t)\frac{\alpha!}{(\alpha-i)!}\\
&  \underset{t\rightarrow\infty}{\sim}2^{\frac{3-\alpha}{2}}\exp(-\frac{1}%
{4}L^{2/3}(t))L^{\frac{\alpha-1}{3}}(t)
\end{align*}
and
\begin{align*}
J_{\alpha}  &  =\int_{\frac{\tau-\hat{x}}{\hat{\sigma}}}^{-\frac{L^{1/3}%
(t)}{\sqrt{2}}}(-y)^{\alpha}\exp\left(  \frac{\sqrt{2}}{4}L^{1/3}(t)y\right)
dy\\
&  =I_{\alpha}-\exp\left(  \frac{\sqrt{2}}{4}L^{1/3}(t)\frac{\tau-\hat{x}%
}{\hat{\sigma}}\right)  \sum_{i=0}^{\alpha}\left(  \frac{\hat{x}-\tau}%
{\hat{\sigma}}\right)  ^{\alpha-i}L^{-\frac{i+1}{3}}(t)2^{\frac{3i+3}{2}}%
\frac{\alpha!}{(\alpha-i)!}\\
&  =I_{\alpha}+M(t),
\end{align*}
with $\hat{x}/\hat{\sigma}\in RV((1+1/\beta)/2)$ when $h\in RV(\beta)$ and
$\hat{x}/\hat{\sigma}\in RV(1/2)$ when $h$ is rapidly varying. Moreover,
$\tau-\hat{x}<0$, thus $M(t)\underset{t\rightarrow\infty}{\longrightarrow}0$
and we have for some positive constant $Q_1$
\[
\vert\Psi_{3}(t,\alpha)\vert\leq Q_1e^{K(\hat{x},t)}\hat{\sigma}^{\alpha+1}
\exp(-\frac{1}{4}L^{2/3}(t))L^{\frac{\alpha-1}{3}}(t).
\]

With (\ref{4.14}), we obtain for some positive constant $Q_2$
\[
\left\vert\frac{\Psi_{3}(t,\alpha)}{\Psi_{2}(t,\alpha)}\right\vert\leq
\frac{Q_2\exp(-\frac{1}{4}L^{2/3}(t))L^{\frac{\alpha-1}{3}}(t)}{\vert
T_{1}(t,\alpha)\vert}.
\]

In Step 1, we saw that $T_{1}(t,\alpha)\underset{t\rightarrow\infty}{\sim
}\sqrt{2\pi}M_{\alpha}$, for $\alpha$ even and $T_{1}(t,\alpha)\underset{t\rightarrow\infty}{\sim}-\frac{h^{(2)}(\hat{x})\hat{\sigma}^{3}}{6}\sqrt{2\pi}M_{\alpha+3}$, for $\alpha$ odd. Hence for $\alpha$ even and $t$ large enough
\begin{equation}
\left\vert\frac{\Psi_{3}(t,\alpha)}{\Psi_{2}(t,\alpha)}\right\vert\leq
Q_3\frac{\exp(-\frac{1}{4}L^{2/3}(t))L^{\frac{\alpha-1}{3}}(t)}{\sqrt{2\pi
}M_{\alpha}}\underset{t\rightarrow\infty}{\longrightarrow}0, \label{4.56}
\end{equation}
and for $\alpha$ odd and $t$ large enough
\[
\left\vert\frac{\Psi_{3}(t,\alpha)}{\Psi_{2}(t,\alpha)}\right\vert\leq
Q_4\frac{\exp(-\frac{1}{4}L^{2/3}(t))L^{\frac{\alpha-1}{3}}(t)}{\frac{\vert
h^{(2)}(\hat{x})\hat{\sigma}^{3}\vert}{6}\sqrt{2\pi}M_{\alpha+3}},
\]
for positive constants $Q_3$ and $Q_4$.\\

As in Lemma \ref{LEM4.3}, we have
\[
\vert h^{(2)}(\hat{x})\hat{\sigma}^{3}\vert\in RV\left(\frac{\theta}{\beta}
+\frac{3}{2\beta}-\frac{3}{2}\right) \mbox{ if $h\in RV(\beta)$}
\]
and
\[
\vert h^{(2)}(\hat{x})\hat{\sigma}^{3}\vert\in RV\left(-\frac{1}{2}\right)
\mbox{ if $h$
is rapidly varying}.
\]
Let us denote
\[
\vert h^{(2)}(\hat{x})\hat{\sigma}^{3}\vert=t^{\rho}L_{1}(t),
\]
for some slowly varying function $L_{1}$ and $\rho<0$ defined as
\[
\rho= \left\{
\begin{array}
[c]{ll}%
\frac{\theta}{\beta}+\frac{3}{2\beta}-\frac{3}{2} & \mbox{ if $h\in
RV(\beta)$}\\
-\frac{1}{2} & \mbox{ if $h$ is rapidly varying}
\end{array}
\right.  .
\]

We have for some positive constant $C$
\[
\left\vert \frac{\Psi_{3}(t,\alpha)}{\Psi_{2}(t,\alpha)}\right\vert \leq
C\exp\left(  -\frac{1}{4}L^{2/3}(t)-\rho\log t-\log L_{1}(t)\right)
L^{\frac{\alpha-1}{3}}(t)\underset{t\rightarrow\infty}{\longrightarrow}0,
\]
since $-(\log t)^{2}/4-\rho\log t-\log L_{1}(t)\underset{t\rightarrow
\infty}{\sim}-(\log t)^{2}/4\underset{t\rightarrow\infty}{\longrightarrow
}-\infty$.

Hence we obtain
\begin{equation}
\Psi_{3}(t,\alpha)\underset{t\rightarrow\infty}{=}o(\Psi_{2}(t,\alpha)).
\label{4.18}
\end{equation}

The proof is completed by combining (\ref{4.3}), (\ref{4.11}), (\ref{4.14})
and (\ref{4.18}).
\end{proof}

\vspace{0.2cm}

\begin{proof}
[Proof of Theorem \ref{THM3.1}]By Lemma \ref{LEM4.5}, if $\alpha=0$, it holds
\[
T_{1}(t,0)\underset{t\rightarrow\infty}{\longrightarrow}\sqrt{2\pi},
\]
since $L(t)\underset{t\rightarrow\infty}{\longrightarrow}\infty$. Approximate the moment generating function of $X$ 
\begin{equation}
\Phi(t) =\Psi(t,0)\underset{t\rightarrow\infty}{=}\hat{\sigma}e^{K(\hat{x},t)}
T_{1}(t,0)(1+o(1))\underset{t\rightarrow\infty}{=}\sqrt{2\pi}\hat{\sigma} e^{K(\hat{x},t)}(1+o(1)). \label{4.19}
\end{equation}

If $\alpha=1$, it holds
\[
T_{1}(t,1)\underset{t\rightarrow\infty}{=}-\frac{h^{(2)}(\hat{x})\hat{\sigma
}^{3}}{6}M_{4}\sqrt{2\pi}(1+o(1)),
\]
where $M_{4}=3$ denotes the fourth order moment of the standard normal
distribution. Consequently, we obtain
\begin{equation}
\Psi(t,1)\underset{t\rightarrow\infty}{=}-\sqrt{2\pi}\hat{\sigma}
^{2}e^{K(\hat{x},t)}\frac{h^{(2)}(\hat{x})\hat{\sigma}^{3}}{2}
(1+o(1))\underset{t\rightarrow\infty}{=}-\Phi(t)\frac{h^{(2)}(\hat{x}
)\hat{\sigma}^{4}}{2}(1+o(1)), \label{4.20}%
\end{equation}
which, together with the definition of $\Psi(t,\alpha)$, yields
\[
\int_{0}^{\infty}xe^{tx}p(x)dx=\Psi(t,1)+\hat{x}\Phi(t)\underset{t\rightarrow
\infty}{=}\left(  \hat{x}-\frac{h^{(2)}(\hat{x})\hat{\sigma}^{4}}
{2}(1+o(1))\right)  \Phi(t).
\]
Hence we get
\begin{equation}
m(t)=\frac{\int_{0}^{\infty}xe^{tx}p(x)dx}{\Phi
(t)}=\hat{x}-\frac{h^{(2)}(\hat{x})\hat{\sigma}^{4}}{2}(1+o(1)). \label{4.21}
\end{equation}
By Lemma \ref{LEM4.3}, we obtain
\begin{equation}
m(t)\underset{t\rightarrow\infty}{\sim}\hat{x}=\psi(t). \label{4.22}
\end{equation}

If $\alpha=2$, it follows
\[
T_{1}(t,2)\underset{t\rightarrow\infty}{=}\sqrt{2\pi}(1+o(1)).
\]
Thus we have
\begin{equation}
\Psi(t,2)\underset{t\rightarrow\infty}{=}\hat{\sigma}^{2}\Phi(t)(1+o(1)).
\label{4.23}%
\end{equation}
Using (\ref{4.20}), (\ref{4.21}) and (\ref{4.23}), it follows
\begin{align*}
&  \int_{0}^{\infty}(x-m(t))^{2}e^{tx}p(x)dx=\int_{0}^{\infty}(x-\hat{x}
+\hat{x}-m(t))^{2}e^{tx}p(x)dx\\
&  =\Psi(t,2)+2(\hat{x}-m(t))\Psi(t,1)+(\hat{x}-m(t))^{2}\Phi(t)\\
&  \underset{t\rightarrow\infty}{=}\hat{\sigma}^{2}\Phi(t)(1+o(1))-\hat
{\sigma}^{2}\Phi(t)\frac{(h^{(2)}(\hat{x})\hat{\sigma}^{3})^{2}}
{4}(1+o(1))\underset{t\rightarrow\infty}{=}\hat{\sigma}^{2}\Phi(t)(1+o(1)),
\end{align*}
where the last equality holds since $\vert h^{(2)}(\hat{x}
)\hat{\sigma}^{3}\vert\underset{t\rightarrow\infty}{\longrightarrow}0$ by
Lemma \ref{LEM4.3}.

Hence we obtain
\begin{equation}
s^{2}(t)=\frac{\int_{0}^{\infty}(x-m(t))^{2}e^{tx}p(x)dx}{\Phi(t)}\underset{t\rightarrow\infty}{\sim}\hat{\sigma}^{2}=\psi^{\prime}(t). \label{4.24}
\end{equation}

If $\alpha=3$, it holds
\[
T_{1}(t,3)=-\frac{h^{(2)}(\hat{x})\hat{\sigma}^{3}}{6}\int_{-\frac{L^{\frac
{1}{3}}(t)}{\sqrt{2}}}^{\frac{L^{\frac{1}{3}}(t)}{\sqrt{2}}}y^{6}e^{-\frac{y^{2}}{2}}dy.
\]

Thus we have
\begin{align}
\Psi(t,3) &  =-\sqrt{2\pi}\hat{\sigma}^{4}e^{K(\hat{x},t)}\frac{h^{(2)}
(\hat{x})\hat{\sigma}^{3}}{6}\int_{-\frac{L^{\frac{1}{3}}(t)}{\sqrt{2}}
}^{\frac{L^{\frac{1}{3}}(t)}{\sqrt{2}}}\frac{1}{\sqrt{2\pi}}y^{6}
e^{-\frac{y^{2}}{2}}dy\label{4.25}\\
&  \underset{t\rightarrow\infty}{=}-M_{6}\frac{h^{(2)}(\hat{x})\hat{\sigma
}^{6}}{6}\Phi(t)(1+o(1)),\nonumber
\end{align}
where $M_{6}=15$ denotes the sixth order moment of standard normal
distribution. Using (\ref{4.20}), (\ref{4.21}), (\ref{4.23}) and (\ref{4.25}), we have
\begin{align*}
&\int_{0}^{\infty}(x-m(t))^{3}e^{tx}p(x)dx=\int_{0}^{\infty}(x-\hat{x}+\hat{x}-m(t))^{3}e^{tx}p(x)dx\\
&=\Psi(t,3)+3(\hat{x}-m(t))\Psi(t,2)+3(\hat{x}-m(t))^{2}\Psi(t,1)+(\hat
{x}-m(t))^{3}\Phi(t)\\
&\underset{t\rightarrow\infty}{=}-h^{(2)}(\hat{x})\hat{\sigma}^{6}\Phi(t)(1+o(1))-h^{(2)}(\hat{x})\hat{\sigma}^{6}\Phi(t)\frac{(h^{(2)}(\hat{x})\hat{\sigma}^{3})^{2}}{4}(1+o(1))\\
&\underset{t\rightarrow\infty}{=}-h^{(2)}(\hat{x})\hat{\sigma}^{6}\Phi(t)(1+o(1)),
\end{align*}
where the last equality holds since $|h^{(2)}(\hat{x})\hat{\sigma}^{3}|\underset{t\rightarrow\infty}{\longrightarrow}0$ by Lemma \ref{LEM4.3}.
Hence we get
\begin{equation}
\mu_{3}(t) =\frac{\int_{0}^{\infty
}(x-m(t))^{3}e^{tx}p(x)dx}{\Phi(t)}\underset{t\rightarrow\infty}{\sim}-h^{(2)}(\hat{x})\hat{\sigma}^{6}=\frac{\psi^{(2)}(t)}{(\psi^{\prime}(t))^{3}}(\psi^{\prime}(t))^{3}=\psi^{(2)}(t).\label{4.26}
\end{equation}

We now consider $\alpha=j>3$ for even $j$. Using (\ref{4.21}) and Lemma
\ref{LEM4.5}, we have
\begin{align}
&\int_{0}^{\infty}(x-m(t))^{j}e^{tx}p(x)dx=\int_{0}^{\infty}(x-\hat{x}+\hat{x}-m(t))^{j}e^{tx}p(x)dx\label{4.27}\\
&=\sum_{i=0}^{j}\binom{j}{i}\left(\frac{h^{(2)}(\hat{x})\hat{\sigma}^{4}}{2}\right)^{i}\hat{\sigma}^{j-i+1}e^{K(\hat{x},t)}T_{1}(t,j-i)(1+o(1)),\nonumber
\end{align}
with
\begin{align*}
T_{1}(t,j-i)&=\left\{
\begin{array}{ll}
\int_{-\frac{L^{\frac{1}{3}}(t)}{\sqrt{2}}}^{\frac{L^{\frac{1}{3}}(t)}
{\sqrt{2}}}y^{j-i}e^{-\frac{y^{2}}{2}}dy & \mbox{for even $i$}\\
-\frac{h^{(2)}(\hat{x})\hat{\sigma}^{3}}{6}\int_{-\frac{L^{\frac{1}{3}}
(t)}{\sqrt{2}}}^{\frac{L^{\frac{1}{3}}(t)}{\sqrt{2}}}y^{3+j-i}e^{-\frac{y^{2}
}{2}}dy & \mbox{for odd $i$}
\end{array}
\right.  \\
&\underset{t\rightarrow\infty}{=}\left\{
\begin{array}{ll}
\sqrt{2\pi}M_{j-i}(1+o(1)) & \mbox{if $i$ is even}\\
-\sqrt{2\pi}\frac{h^{(2)}(\hat{x})\hat{\sigma}^{3}}{6}M_{3+j-i} &\mbox{if $i$
is odd}
\end{array}
\right..
\end{align*}
Using (\ref{4.19}), we obtain
\begin{align*}
&\int_{0}^{\infty}(x-m(t))^{j}e^{tx}p(x)dx\\
&\underset{t\rightarrow\infty}{=}\sum_{i=0}^{j}\binom{j}{i}\left(
\frac{h^{(2)}(\hat{x})\hat{\sigma}^{4}}{2}\right)^{i}\Phi(t)\times\\
&\hspace{0.2cm}\left[\hat{\sigma}^{j-i}M_{j-i}(1+o(1))\mathbb{I}
_{even~i}-\frac{h^{(2)}(\hat{x})\hat{\sigma}^{4}}{2}\sigma^{j-i-1}
\frac{M_{3+j-i}}{3}(1+o(1))\mathbb{I}_{odd~i}\right]\\
&\underset{t\rightarrow\infty}{=}\sum_{k=0}^{j/2}\binom{j}{2k}\left(
\frac{h^{(2)}(\hat{x})\hat{\sigma}^{4}}{2}\right)^{2k}\Phi(t)\hat{\sigma
}^{j-2k}M_{j-2k}(1+o(1))\\
&\hspace{0.2cm}-\sum_{k=0}^{j/2-1}\binom{j}{2k+1}\left(\frac{h^{(2)}
(\hat{x})\hat{\sigma}^{4}}{2}\right)^{2(k+1)}\Phi(t)\hat{\sigma}
^{j-2k-2}\frac{M_{3+j-2k-1}}{3}(1+o(1))\\
&\underset{t\rightarrow\infty}{\sim}\hat{\sigma}^{j}\Phi(t)\times\\
&\left(M_{j}+\sum_{k=1}^{j/2}\binom{j}{2k}(h^{(2)}(\hat{x})\hat{\sigma
}^{3})^{2k}\frac{M_{j-2k}}{2^{2k}}-\sum_{k=0}^{j/2-1}\binom{j}{2k+1}
(h^{(2)}(\hat{x})\hat{\sigma}^{3})^{2(k+1)}\frac{M_{3+j-2k-1}}{3\times
2^{2(k+1)}}\right)\\
&\underset{t\rightarrow\infty}{=}M_{j}\hat{\sigma}^{j}\Phi(t)(1+o(1)),
\end{align*}
since $|h^{(2)}(\hat{x})\hat{\sigma}^{3}|\underset{t\rightarrow\infty
}{\longrightarrow}0$ by Lemma \ref{LEM4.3}. Hence we get for even $j$
\begin{equation}
\mu_{j}(t) =\frac{\int_{0}^{\infty}(x-m(t))^{j}e^{tx}p(x)dx}{\Phi(t)}\underset{t\rightarrow\infty}{\sim}M_{j}\hat{\sigma}^{j}\underset{t\rightarrow\infty}{\sim}M_{j}s^{j}(t),\label{4.28}
\end{equation}
by (\ref{4.24}).\newline

To conclude, we consider $\alpha=j>3$ for odd $j$. (\ref{4.27}) holds true with
\begin{align*}
T_{1}(t,j-i) &=\left\{
\begin{array}{ll}
\int_{-\frac{L^{\frac{1}{3}}(t)}{\sqrt{2}}}^{\frac{L^{\frac{1}{3}}(t)}
{\sqrt{2}}}y^{j-i}e^{-\frac{y^{2}}{2}}dy & \mbox{for odd $i$}\\
-\frac{h^{(2)}(\hat{x})\hat{\sigma}^{3}}{6}\int_{-\frac{L^{\frac{1}{3}}
(t)}{\sqrt{2}}}^{\frac{L^{\frac{1}{3}}(t)}{\sqrt{2}}}y^{3+j-i}e^{-\frac{y^{2}
}{2}}dy & \mbox{for even $i$}
\end{array}
\right. \\
&\underset{t\rightarrow\infty}{=}\left\{
\begin{array}{ll}
\sqrt{2\pi}M_{j-i}(1+o(1)) & \mbox{if $i$ is odd}\\
-\sqrt{2\pi}\frac{h^{(2)}(\hat{x})\hat{\sigma}^{3}}{6}M_{3+j-i} &\mbox{if $i$
is even}
\end{array}
\right..
\end{align*}
Thus, with the same tools as above, some calculus and making use of
(\ref{4.28}),
\[
\int_{0}^{\infty}(x-m(t))^{j}e^{tx}p(x)dx\underset{t\rightarrow\infty}{=}
\frac{M_{j+3}-3jM_{j-1}}{6}\times(-h^{(2)}(\hat{x})\hat{\sigma}^{j+3})\Phi(t).
\]
Hence we get for odd $j$
\begin{align}
\label{4.29}
\mu_{j}(t) &=\frac{\int_{0}^{\infty}(x-m(t))^{j}e^{tx}p(x)dx}{\Phi(t)}\underset{t\rightarrow\infty}{\sim}\frac{M_{j+3}-3jM_{j-1}}{6}\times(-h^{(2)}(\hat{x})\hat{\sigma}^{j+3})\\
&\underset{t\rightarrow\infty}{\sim}\frac{M_{j+3}-3jM_{j-1}}{6}\mu
_{3}(t)s^{j-3}(t),\nonumber\\
\end{align}
by (\ref{4.24}) and (\ref{4.26}).\newline

The proof is complete by considering (\ref{4.22}), (\ref{4.24}), (\ref{4.26}), (\ref{4.28}) and (\ref{4.29}).\newline
\end{proof}

\begin{proof}
[Proof of Theorem \ref{THM3.2}]It is proved incidentally in (\ref{4.19}).
\end{proof}

\begin{proof}[Proof of Theorem \ref{Thm3.4}] Consider the moment generating
function of the random variable
\[
Y_{t}:=\frac{\mathcal{X}_{t}-m(t)}{s(t)}.
\]
It holds for any $\lambda$
\begin{align*}
\log E\exp\lambda Y_{t}  & =-\lambda\frac{m(t)}{s(t)}+\log\frac{\Phi\left(  t+\frac{\lambda}{s(t)}\right)}{\Phi(t)}\\
& =\frac{\lambda^2}{2}\frac{s^{2}\left(t+\theta\frac{\lambda}{s(t)}\right)}
{s^{2}(t)}=\frac{\lambda^2}{2}\frac{\psi^{\prime}\left(t+\theta\frac{\lambda\left(1+o(1)\right)}{\sqrt{\psi^{\prime}(t)}}\right)}{\psi^{\prime}(t)}\left(1+o(1)\right)
\end{align*}
as $t\rightarrow\infty,$ for some $\theta\in\left(0,1\right)$ depending on
$t$, where we used Theorem \ref{THM3.1}. Now making use of Corollaries
\ref{COR2.2} and \ref{COR2.3} it follows that
\[
\lim_{t\rightarrow\infty}\log E\exp\lambda Y_{t}=\frac{\lambda^2}{2},
\]
which proves the claim.
\end{proof}

Aknowledgement: The authors thank an anonymous referee for comments and suggestions which helped greatly to the improvement on a previous version of this paper.

\bibliographystyle{plain}
\bibliography{ColloqueDHV2013-biblio}

\end{document}